\documentclass[12pt,a4paper]{article}
\linespread{1}

\usepackage{amssymb} 
\usepackage{amsmath} 
\usepackage{amsthm} 
\usepackage{hyperref}
\usepackage{todonotes}
\usepackage{verbatim}
\usepackage{a4wide}

\newtheorem{theorem}{Theorem}
\newtheorem{lemma}[theorem]{Lemma}
\newtheorem{corollary}[theorem]{Corollary}
\newtheorem{conjecture}[theorem]{Conjecture}

\newcommand{\Prob}{\,\mathbb{P}}

\newcommand{\const}{c}

\title{Structure and colour in triangle-free graphs}

\author{
N.R. Aravind
\thanks{Department of Computer Science \& Engineering, Indian Institute of Technology Hyderabad, India. Email: \protect\href{mailto:aravind@iith.ac.in}{\protect\nolinkurl{aravind@iith.ac.in}}. Supported by a travel grant from the Science and Engineering Research Board, Department of Science and Technology, Govt of India (project number: MTR/2017/000711). This work was initiated during a one-month visit to Radboud University Nijmegen, the hospitality of which is warmly acknowledged.}
\and
Stijn Cambie
\thanks{Department of Mathematics, Radboud University Nijmegen, Netherlands. 
Email: \protect\href{mailto:stijn.cambie@hotmail.com}{\protect\nolinkurl{stijn.cambie@hotmail.com}}, \protect\href{mailto:r.deverclos@math.ru.nl}{\protect\nolinkurl{r.deverclos@math.ru.nl}}, \protect\href{mailto:ross.kang@gmail.com}{\protect\nolinkurl{ross.kang@gmail.com}}. Supported by a Vidi grant (639.032.614) of the Netherlands Organisation for Scientific Research (NWO).}
\and
Wouter Cames van Batenburg
\thanks{Department of Computer Science, Universit\'e Libre de Bruxelles, Belgium. 
Email: \protect\href{mailto:wcamesva@ulb.ac.be}{\protect\nolinkurl{wcamesva@ulb.ac.be}}. Supported by an ARC grant from the Wallonia-Brussels Federation of Belgium.}
\and
R\'emi de Joannis de Verclos
\footnotemark[2]
\and
Ross J. Kang
\footnotemark[2]
\and
Viresh Patel
\thanks{Korteweg de Vries Institute for Mathematics, University of Amsterdam. Email: \protect\href{mailto:vpatel@uva.nl}{\protect\nolinkurl{vpatel@uva.nl}}. Supported
by the Netherlands Organisation for Scientific Research (NWO) through the Gravitation Programme Networks
(024.002.003).}
}

\begin{document}

\maketitle

\begin{abstract}
Motivated by a recent conjecture of the first author, we prove that every properly coloured triangle-free graph of chromatic number $\chi$ contains a rainbow independent set of size $\lceil\frac12\chi\rceil$.
This is sharp up to a factor $2$.
This result and its short proof have implications for the related notion of chromatic discrepancy.

Drawing inspiration from both structural and extremal graph theory, we conjecture that every triangle-free graph of chromatic number $\chi$ contains an induced cycle of length $\Omega(\chi\log\chi)$ as $\chi\to\infty$.
Even if one only demands an induced path of length $\Omega(\chi\log\chi)$, the conclusion would be sharp up to a constant multiple.
We prove it for regular girth $5$ graphs and for girth $21$ graphs.

As a common strengthening of the induced paths form of this conjecture and of Johansson's theorem (1996), we posit the existence of some $c >0$ such that for every forest $H$ on $D$ vertices, every triangle-free and induced $H$-free graph has chromatic number at most $c D/\log D$. We prove this assertion with `triangle-free' replaced by `regular girth $5$'.

\end{abstract}

\section{Introduction}\label{sec:intro}

For graphs with bounded clique number $\omega$, the tradeoff between chromatic number $\chi$ being high and there being certain induced subgraphs is a central topic in graph theory. This is the context of the famous and longstanding conjecture of, independently, Gy\'arf\'as~\cite{Gya75} and Sumner~\cite{Sum81}, cf.~\cite{ScSe18+}.
This note is solely concerned with this type of problem.

Our starting point is indeed a more explicit form of this tradeoff, where the commodities are instead proper colourings and {\em rainbow} induced subgraphs (that is, ones in which all colours assigned to its vertices are distinct).
This already has some history in the area: for instance, Kierstead and Trotter~\cite{KiTr92} pursued this in attempts towards the Gy\'arf\'as--Sumner Conjecture.
It is interesting in its own right and some recent activity~\cite{BBCF19,GySa16,ScSe17} has been motivated by a conjecture of this form due to the first author.
\begin{conjecture}[Aravind, cf.~\cite{BBCF19}]\label{conj:aravind}
Every properly coloured triangle-free graph of chromatic number $\chi$ contains a rainbow induced path of length $\chi$.
\end{conjecture}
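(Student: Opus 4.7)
The plan is to imitate Gy\'arf\'as's classical argument for rainbow paths and then strengthen its conclusion using the triangle-free hypothesis. Gy\'arf\'as's idea is to assign to each vertex $v$ of a properly coloured graph $G$ the value $f(v)$ equal to the largest number of colours appearing on a rainbow path ending at $v$, and then to observe that $f$ is itself a proper colouring of $G$, so $\max_v f(v)\geq \chi(G)$. The natural attack on Conjecture~\ref{conj:aravind} is therefore to redefine $f(v)$ to be the largest number of colours on a rainbow \emph{induced} path ending at $v$, and to try to show that this modified $f$ is still a proper colouring; if this works, some vertex attains $f(v)\geq\chi$, which is exactly the desired statement.

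To see where triangle-freeness enters, suppose $uv$ is an edge with $f(u)=f(v)=k$, and let $P=(w_0,\ldots,w_{k-1}=u)$ be a witnessing rainbow induced path. In the non-induced setting one simply appends $v$ to $P$, contradicting $f(v)=k$; in the induced setting this fails only if (a) $v$ has a neighbour on $P$ other than $u$, or (b) the colour of $v$ already occurs on $P$. Crucially, triangle-freeness forbids $v$ from being adjacent to $w_{k-2}$, so in case (a) any offending neighbour $w_i$ satisfies $i\leq k-3$, and moreover the neighbours of $v$ on $P$ form an independent set. One should therefore try to reroute $P$ through $v$ by shortcutting from some $w_i$ directly to $v$, producing a rainbow induced path ending elsewhere that still contradicts the maximality of $f$ somewhere.

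The hard part, and almost certainly the main obstacle, is case (b) and the interaction between the two cases: rerouting $P$ may force us to discard precisely the colour we need in order to derive a contradiction, and chords arbitrarily deep inside $P$ cannot be ruled out by triangle-freeness alone. To make headway I would pass to a minimum counterexample, which may be assumed vertex-critical so that every vertex has degree at least $\chi-1$; the resulting density, combined with Ramsey-type bounds on independence in triangle-free neighbourhoods, should provide many alternative witnessing paths. If this purely combinatorial attack resists, a probabilistic variant seems promising: first establish an induced rainbow path of length $(1-o(1))\chi$ by a random colour-class sampling argument in the spirit of the paper's $\lceil\chi/2\rceil$ rainbow independent set theorem, and then close the gap by an absorption or stability step. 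Throughout, the crux is globalising the local benefit of triangle-freeness (absence of short chords) into a genuine induced-path guarantee, and this is where I expect the real difficulty to lie.
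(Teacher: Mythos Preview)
This statement is Conjecture~\ref{conj:aravind}, which the paper presents as an \emph{open problem}; the paper does not prove it. Its contribution in this direction is the weaker Theorem~\ref{thm:main} (for $r=3$, a rainbow independent set of size $\lceil\tfrac12\chi\rceil$), noted to follow from the conjecture by taking every other vertex of the putative path.

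Your proposal is not a proof either, and you say as much: after setting up the Gy\'arf\'as-style labelling $f(v)$ and isolating the two obstructions (a neighbour of $v$ deep in $P$, or the colour of $v$ already on $P$), you write that case~(b) and its interaction with~(a) is ``almost certainly the main obstacle'' and ``where I expect the real difficulty to lie''. Everything after that point is programmatic rather than an argument. The rerouting idea comes with no termination or invariant showing that both the rainbow and the induced properties survive simultaneously; passing to a vertex-critical counterexample yields minimum degree $\chi-1$ but no evident leverage on \emph{induced} rainbow paths; the phrase ``Ramsey-type bounds on independence in triangle-free neighbourhoods'' is confused, since in a triangle-free graph every neighbourhood is already independent; and the probabilistic/absorption fallback is a wish list. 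In short, the conjecture is open in the paper, and your outline does not close it.
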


\noindent
By classic results~\cite{Gal68,Has65,Roy67,Vit62,Gya87}, the statement is true when omitting both `triangle-free' and `induced', or omitting `rainbow'. 
It is false when omitting both `triangle-free' and `rainbow', as we discuss below (see Theorem~\ref{thm:offdiagonal}).
Babu, Basavaraju, Chandran and Francis~\cite{BBCF19} proved the statement under the extra assumption that no cycle in $G$ has length less than $\chi$.
Scott and Seymour~\cite{ScSe17} proved a form of it for any fixed clique number $\omega \ge 2$, but with length $f(\chi)$ for some unbounded increasing function $f$ instead of $\chi$.

In the discussion at the end of their note, Scott and Seymour observed that the type of rainbow induced subgraph it makes sense to hope for in this problem setting is already rather simple: it is limited to forests of paths. 
In Section~\ref{sec:proof}, we focus on the simplest possible structure and show the following.

\begin{theorem}\label{thm:main}
For each $r\ge 3$ every properly coloured $K_r$-free graph of chromatic number $\chi$ contains a rainbow independent set of size $\lceil\frac12\chi^{1/(r-2)}\rceil$.
\end{theorem}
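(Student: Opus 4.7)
The plan is to proceed by induction on $r$, analysing a maximum rainbow independent set $I$ in the properly coloured graph $G$. The structural setup is as follows: write $C = c(I) \subseteq [\chi]$ (so $|C| = |I|$) and let $W$ denote the set of vertices whose colour lies outside $C$. By maximality of $I$, every vertex $w \in W$ must be adjacent to some vertex of $I$ (else $I \cup \{w\}$ would still be rainbow and independent), so $W \subseteq N(I)$; in particular $W$ is covered by the sets $A_u := N(u) \cap W$ for $u \in I$. Since $V \setminus W$ is properly coloured using only the $|I|$ colours in $C$, any proper colouring of $G[W]$ extends to a proper colouring of all of $G$ using $|I| + \chi(G[W])$ colours, giving $\chi \le |I| + \chi(G[W])$.

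The heart of the argument is to bound $\chi(G[A_u])$ for each $u \in I$. Since $G$ is $K_r$-free, each $G[N(u)]$ is $K_{r-1}$-free, hence so is $G[A_u]$, which inherits a proper colouring from $c$. For the base case $r = 3$, $G[N(u)]$ is even edgeless, so $\chi(G[A_u]) \le 1$ and thus $\chi(G[W]) \le |I|$, yielding $\chi \le 2|I|$ and hence $|I| \ge \lceil \chi/2 \rceil$ as desired. For the inductive step $r \ge 4$, applying the theorem to $G[A_u]$ supplies a rainbow independent set $J_u \subseteq A_u$ of size $\lceil \tfrac{1}{2}\chi(G[A_u])^{1/(r-3)} \rceil$. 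Crucially, $J_u$ is also a rainbow independent set in $G$ (it is independent and its colours, lying in $[\chi] \setminus C$, are all distinct), so maximality of $I$ forces $|J_u| \le |I|$, i.e.\ $\chi(G[A_u]) \le (2|I|)^{r-3}$.

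Summing over $u \in I$ then yields $\chi(G[W]) \le |I|\cdot(2|I|)^{r-3} = 2^{r-3}|I|^{r-2}$, and therefore
\[
\chi \;\le\; |I| + 2^{r-3}|I|^{r-2}.
\]
It remains to solve this inequality for $|I|$. A short verification shows that if $s = \lceil \tfrac{1}{2}\chi^{1/(r-2)} \rceil \ge 2$, then $\chi > (2s-2)^{r-2} = 2^{r-2}(s-1)^{r-2} \ge (s-1) + 2^{r-3}(s-1)^{r-2}$, contradicting $|I| \le s - 1$, while the case $s = 1$ is trivial. The main obstacle is really only this last book-keeping step; the conceptual engine is simply the observation that a rainbow independent set inside an $A_u$ is already a rainbow independent set in $G$, which turns maximality of $I$ into a strong chromatic bound on each neighbourhood and feeds the induction on $r$.
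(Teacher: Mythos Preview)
Your proof is correct. The approach is a close cousin of the paper's but packaged differently, and the difference is worth noting. The paper builds its rainbow independent set greedily: it repeatedly picks a vertex $v$, removes the colour class $c^{-1}(c(v))$ and the neighbourhood $N(v)$, and invokes a threshold dichotomy---if ever some $G'[N(v)]$ has chromatic number exceeding $\chi^{(r-3)/(r-2)}$, induction on that $K_{r-1}$-free neighbourhood finishes immediately; otherwise the union of colour classes and neighbourhoods certifies $\chi \le |X|(1+\chi^{(r-3)/(r-2)})$. You instead fix a \emph{maximum} rainbow independent set $I$ up front and let maximality do the work: any rainbow independent set inside $A_u = N(u)\cap W$ is already one in $G$, so induction applied to the $K_{r-1}$-free graph $G[A_u]$ forces $\chi(G[A_u]) \le (2|I|)^{r-3}$ directly, with no threshold or early-stopping case.

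Both arguments rest on the same engine---neighbourhoods in a $K_r$-free graph are $K_{r-1}$-free, so induction applies there, and what remains is covered by few colour classes. Your extremal formulation is arguably cleaner (no algorithm, no case split), at the price of a slightly messier final inequality $\chi \le |I| + 2^{r-3}|I|^{r-2}$ in place of the paper's $\chi \le |X|(1+\chi^{(r-3)/(r-2)})$; but as you verify, it still yields $|I|\ge \lceil\tfrac12\chi^{1/(r-2)}\rceil$. The paper's version, being constructive, is what gets reused for the random-graph and discrepancy applications later in the paper.
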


\noindent
When $r=3$ this is sharp up to a factor $2$.
The $r=3$ case is a consequence of Conjecture~\ref{conj:aravind} if true, by taking every other vertex in the path.
In Section~\ref{sec:discrepancy}, we discuss this result's implications for the related concept of chromatic discrepancy~\cite{AKSS15}.
There appears to be room for improvement in Theorem~\ref{thm:main} for $r>3$, but much of it may come from the gap in current bounds on off-diagonal Ramsey numbers, as we next discuss.

Consider the smallest independence number $\alpha$ in $K_r$-free graphs as a function of the chromatic number $\chi$.
The following statement follows from the best-to-date asymptotic results on off-diagonal Ramsey numbers. For completeness, its brief derivation is included in the appendix.

\begin{theorem}[\cite{AKS80,BoKe10}]\label{thm:offdiagonal}
For each $r\ge 3$ there are $\const_1,\const_2>0$ such that the following hold.
There is a $K_r$-free graph of chromatic number $\chi$ that contains no independent set of size
$\const_1\chi^{2/(r-1)} (\log \chi)^{(r^2-r-4)/((r-2)(r-1))}$.
Every $K_r$-free graph of chromatic number $\chi$ contains an independent set of size $\const_2\chi^{1/(r-2)}\log\chi$.
\end{theorem}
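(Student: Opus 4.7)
The plan is to reduce both assertions to known asymptotic bounds on the off-diagonal Ramsey number $R(r,t)$: the Bohman--Keevash lower bound supplies the extremal construction, while the Ajtai--Komlós--Szemerédi upper bound supplies the universal lower bound on $\alpha$.

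For the existence claim, I would invoke the Bohman--Keevash bound $R(r,t) = \Omega(t^{(r+1)/2}/(\log t)^{(r+1)/2-1/(r-2)})$, obtained by analysing the random $K_r$-free process. It produces, for every sufficiently large $t$, a $K_r$-free graph $G$ on $N \asymp t^{(r+1)/2}/(\log t)^{(r+1)/2-1/(r-2)}$ vertices with $\alpha(G) < t$, so $\chi(G) \geq N/t \gtrsim t^{(r-1)/2}/(\log t)^{(r+1)/2-1/(r-2)}$. Given a target $\chi$, one solves for $t$ in terms of $\chi$; the exponent arithmetic $\frac{2}{r-1}\bigl(\frac{r+1}{2} - \frac{1}{r-2}\bigr) = \frac{r^2-r-4}{(r-1)(r-2)}$ identifies exactly the logarithmic exponent in the statement. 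Relabelling constants (and passing to a $\chi$-chromatic induced subgraph if one insists on equality) then finishes this direction.

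For the universal bound, I would begin with the Ajtai--Komlós--Szemerédi bound $R(r,t) = O(t^{r-1}/(\log t)^{r-2})$, equivalently that every $K_r$-free graph on $n$ vertices contains an independent set of size at least $f(n) := c\, n^{1/(r-1)}(\log n)^{(r-2)/(r-1)}$ for a suitable $c = c(r) > 0$. Iteratively peeling off such independent sets as colour classes yields $\chi(G) = O\bigl(\int_1^n dx/f(x)\bigr) = O\bigl((n/\log n)^{(r-2)/(r-1)}\bigr)$. Inverting, and using the trivial $\log n \geq \log \chi$, gives $n = \Omega(\chi^{(r-1)/(r-2)} \log \chi)$. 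Substituting this back into $\alpha(G) \geq f(n)$ produces $\alpha(G) = \Omega(\chi^{1/(r-2)} \log \chi)$, matching the claim; the crucial cancellation is that $\tfrac{1}{r-1} + \tfrac{r-2}{r-1} = 1$.

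The only real obstacle is tracking the logarithmic factors through the exponent algebra; modulo that routine bookkeeping, both assertions are immediate consequences of the cited Ramsey bounds. The one place care is needed is the inversion in the second part: naively applying the AKS independence bound only to $n \geq \chi$ would lose a factor of $(\log \chi)^{1-1/(r-2)}$, so one really does need the sharper estimate $n = \Omega(\chi^{(r-1)/(r-2)} \log \chi)$ coming from the iterative colouring bound.
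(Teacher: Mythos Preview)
Your proposal is correct and essentially identical to the paper's appendix proof: both parts invoke exactly the cited results (Bohman--Keevash for the construction, Ajtai--Koml\'os--Szemer\'edi for the universal bound), convert between $n$, $\alpha$ and $\chi$ via $\chi\ge n/\alpha$, and use the iterated-colouring bound $\chi=O((n/\log n)^{(r-2)/(r-1)})$ to obtain $n=\Omega(\chi^{(r-1)/(r-2)}\log\chi)$ before substituting back into $\alpha\ge f(n)$. Your write-up is in fact slightly more explicit about the exponent bookkeeping and the reason the iterative bound is needed, but there is no methodological difference.
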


\noindent
This immediately yields the following result complementing Theorem~\ref{thm:main}.
\begin{corollary}\label{cor:offdiagonal}
For each $r\ge 4$ there are $\const>0$ and a $K_r$-free graph of chromatic number $\chi$ such that no matter the proper colouring it contains no rainbow independent set of size $\const\chi^{2/(r-1)} (\log \chi)^{(r^2-r-4)/((r-2)(r-1))}$.
\end{corollary}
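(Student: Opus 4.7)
The statement is essentially a direct consequence of Theorem~\ref{thm:offdiagonal}, so my plan is very short. The key observation is the trivial monotonicity: a rainbow independent set in a properly coloured graph $G$ is, in particular, an independent set of $G$, so its size is at most $\alpha(G)$, irrespective of which proper colouring is used. Hence any \emph{lower} bound on $\alpha$ in terms of $\chi$ controls rainbow independent sets from above; equivalently, any construction of a $K_r$-free graph with low $\alpha$ and high $\chi$ yields the desired rainbow obstruction for free.

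Concretely, I would proceed as follows. Fix $r\ge 4$. The first half of Theorem~\ref{thm:offdiagonal} supplies, for each target $\chi$, a $K_r$-free graph $G$ with $\chi(G)=\chi$ and
\[
\alpha(G) < \const_1 \chi^{2/(r-1)} (\log \chi)^{(r^2-r-4)/((r-2)(r-1))}.
\]
Take \emph{any} proper colouring of $G$, and let $I$ be a rainbow independent set in this colouring. Since $I$ is independent in $G$, we have $|I|\le \alpha(G)$, and the above bound gives the conclusion with $\const := \const_1$.

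There is essentially no obstacle: the statement is purely an ``independence number beats rainbow independence number'' observation applied to the off-diagonal Ramsey construction already cited. The only thing worth flagging is that the colouring plays no role whatsoever in the lower-bound construction, which also clarifies the source of the remaining gap between Theorem~\ref{thm:main} and Corollary~\ref{cor:offdiagonal}: it is exactly the gap between the current lower and upper bounds on off-diagonal Ramsey numbers $R(r,t)$ for $r\ge 4$, not anything specific to rainbow structure.
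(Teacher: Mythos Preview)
Your argument is correct and is exactly the paper's: the text simply says the corollary is yielded ``immediately'' by Theorem~\ref{thm:offdiagonal}, via the trivial observation that a rainbow independent set is in particular an independent set, so its size is capped by $\alpha(G)$ regardless of the colouring. One minor wording slip: you write ``any \emph{lower} bound on $\alpha$'' where you mean an \emph{upper} bound on $\alpha$ (equivalently, a lower-bound construction for the Ramsey number); your concrete execution thereafter is correct.
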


\noindent
For $r\ge4$, it remains possible that the bound in Theorem~\ref{thm:main} could be increased by a logarithmic factor, so that it would match and indeed qualitatively strengthen Theorem~\ref{thm:offdiagonal}. 
On the other hand, intuitively (based on the sharpness of Theorem~\ref{thm:offdiagonal} for $r=3$), improvement by more than a logarithmic factor may be for want of a breakthrough in Quantitative Ramsey Theory. 

Theorems~\ref{thm:main} and~\ref{thm:offdiagonal} hint at the following generalisation of Conjecture~\ref{conj:aravind}.

\begin{conjecture}\label{conj:aravindclique}
For each $r\ge 3$ every properly coloured $K_r$-free graph of chromatic number $\chi$ contains a rainbow induced path of length $\chi^{1/(r-2)}$.
\end{conjecture}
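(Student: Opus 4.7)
The natural approach is induction on $r$, with base case $r=3$ coinciding with Conjecture~\ref{conj:aravind}; any route of this form must therefore at least resolve that open conjecture. Assuming it for $K_{r-1}$-free graphs, given a properly coloured $K_r$-free graph $G$ with $\chi(G)=\chi$, the plan would be to locate a vertex $v$ whose open neighbourhood $N(v)$ satisfies $\chi(N(v)) \ge \chi^{(r-3)/(r-2)}$. Since $G$ is $K_r$-free, $N(v)$ is $K_{r-1}$-free, and the restriction of the given proper colouring to $N(v)$ remains proper. Applying the inductive hypothesis inside $N(v)$ yields a rainbow induced path in $N(v)$ --- hence also in $G$ --- of length
\[
\bigl(\chi^{(r-3)/(r-2)}\bigr)^{1/(r-3)} \;=\; \chi^{1/(r-2)},
\]
as required. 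The rates here are consistent with Theorem~\ref{thm:main} and Theorem~\ref{thm:offdiagonal}, which suggests $\chi^{(r-3)/(r-2)}$ as the ``right'' threshold for the neighbourhood's chromatic number.

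The principal obstacle in the inductive step is producing such a $v$. One would hope to proceed by a Ramsey-style iterated density increment: either some neighbourhood has the required chromatic number, or the absence of such neighbourhoods imposes enough structure to iterate and win. However, Mycielski-type constructions suggest that $K_r$-free graphs of high chromatic number can have all neighbourhoods of rather low chromatic number, so in the worst regime one would need to build a long rainbow induced path directly from the ``chromatically local'' structure. A careful coloured adaptation of a Gallai--Roy orientation (orient each edge from smaller to larger colour and exploit acyclicity) is the obvious tool, but turning a long rainbow directed walk into an \emph{induced} path is where the difficulty truly lies: induced paths resist the greedy concatenation arguments that succeed for independent sets.

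A complementary route --- potentially more tractable for $r\ge 4$ --- is to upgrade Theorem~\ref{thm:main} from rainbow independent sets directly to rainbow induced paths of comparable length: take the size-$\lceil\frac12\chi^{1/(r-2)}\rceil$ rainbow independent set it produces, and link consecutive members by short rainbow induced connectors chosen from outside the used colour set. If such linking can be arranged at a cost of at most a polylogarithmic factor in length, one would obtain the conjectured bound up to a polylogarithmic factor, narrowly missing the precise rate but establishing it qualitatively. In any case, Conjecture~\ref{conj:aravind} itself --- the case $r=3$ --- appears to be the real barrier to a complete proof, with all higher $r$ presumably reducible to it via some variant of the inductive scheme above.
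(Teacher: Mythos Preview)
This statement is Conjecture~\ref{conj:aravindclique} in the paper, and the paper does \emph{not} supply a proof: it is posed as an open problem, motivated by Theorems~\ref{thm:main} and~\ref{thm:offdiagonal}, and the only partial progress cited is the qualitative result of Scott and Seymour. So there is no paper proof to compare your proposal against.

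Your proposal is, appropriately, not a proof but an honest outline of where the difficulties lie, and on that score it is accurate. You correctly observe that any induction on $r$ bottoms out at $r=3$, which is exactly Conjecture~\ref{conj:aravind} and is open; and you correctly flag that the inductive step itself is obstructed because $K_r$-free graphs of large chromatic number need not have any vertex whose neighbourhood has chromatic number as large as $\chi^{(r-3)/(r-2)}$ (Mycielski-type constructions). The exponent bookkeeping in your inductive sketch is consistent with the proof of Theorem~\ref{thm:main}, which uses precisely the dichotomy ``some neighbourhood has chromatic number exceeding $\chi^{(r-3)/(r-2)}$, or not'' --- but there the ``or not'' branch is handled by building a rainbow \emph{independent set}, which is far easier than a rainbow induced path. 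Your suggestion of linking the independent set from Theorem~\ref{thm:main} into a path by short rainbow connectors is plausible-sounding but there is no mechanism offered for why such connectors should exist, be induced, and avoid both previously used colours and chords back into the set; this is the crux, and nothing here gets past it.

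In short: your write-up is a fair summary of the state of play rather than a proof, and the paper itself offers nothing more on this conjecture.
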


\noindent
The aforementioned result of Scott and Seymour~\cite[1.3]{ScSe17} already constitutes partial progress. 
The statement is true when omitting `rainbow'. It is surprisingly difficult to bound the maximum rainbow induced path length significantly below both the maximum induced path length and the chromatic number.

\medskip

The proof of Theorem~\ref{thm:main} has affinity to a proof of Gy\'arf\'as guaranteeing induced paths of length $\chi$ in triangle-free graphs of chromatic number $\chi$~\cite{Gya87}. 
Returning to the roots concerning `non-rainbow' structure, we have formulated,
based on Theorem~\ref{thm:offdiagonal} and some intuition from the random graph~\cite{Luc93},
the following successively stronger conjectures.
\begin{conjecture}\label{conj:nonrainbow}
There is some $\const>0$ such that every triangle-free graph of chromatic number $\chi$ contains an induced path of length at least $\const\chi\log\chi$.
\end{conjecture}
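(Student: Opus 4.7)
The plan is to strengthen Gy\'arf\'as's induced path of length $\chi$ by a logarithmic factor, matching the gain from $\chi$ to $\chi\log\chi$ for independent sets witnessed by Theorem~\ref{thm:offdiagonal}. The basic tool is a BFS-layered decomposition: root a BFS at any $v_0\in V(G)$, and let $L_0,L_1,\ldots,L_D$ denote the layers. Since $G$ is triangle-free and BFS edges appear only within a layer or between adjacent layers, any BFS-path from $v_0$ to a vertex at depth $k$ is automatically induced. Hence if the BFS depth $D$ satisfies $D\ge c\chi\log\chi$ we are done.

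In the complementary case, the naive pigeonhole conclusion that some layer $L_i$ has chromatic number $\ge \chi/D$ is vacuous as soon as $D\approx c\chi\log\chi$. To circumvent this, I would iterate BFS \emph{inside} the subgraphs $G[L_i]$ themselves (which inherit triangle-freeness), and then concatenate a BFS-path found in $G[L_i]$ with the original BFS-path from $v_0$ to $L_i$. The combinatorial accounting should track a \emph{path budget} and a \emph{chromatic budget} in tandem, with the hope that Theorem~\ref{thm:offdiagonal} can be invoked on the residual high-chromatic subgraph to locate an independent set of size $\Omega(\chi\log\chi)$ which is then threaded into an induced path via the BFS framework.

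The principal obstacle is the inducedness constraint when splicing paths from distinct BFS trees: a single chord between an inner-BFS vertex and an outer-BFS ancestor spoils the path, and the starting vertex in each $L_i$ must be chosen so that its ancestral chain in the outer BFS avoids chords to the inner path. Managing this likely requires either a probabilistic sparsification analogous to the alteration step in Johansson's proof (randomly retain each vertex independently, then apply the Lov\'asz Local Lemma to rule out short chords) or a structural strengthening such as regular girth $5$, under which the authors already announce partial results. A decisive new idea — perhaps leveraging the fractional chromatic number, which behaves more predictably under BFS-layer restrictions — may be required, as witnessed by the open status of the strictly weaker Conjecture~\ref{conj:aravind}.
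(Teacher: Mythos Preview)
This statement is Conjecture~\ref{conj:nonrainbow}, not a theorem: the paper does \emph{not} prove it. The authors explicitly leave it open and only establish partial results under extra hypotheses (Theorem~\ref{thm:highergirth}: regular girth~$5$, or girth~$21$). There is therefore no paper proof to compare your attempt against.

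Your proposal is not a proof either, and to your credit you say so. The BFS-depth dichotomy in your first paragraph is sound (any shortest path is induced, so large eccentricity already wins --- incidentally, triangle-freeness is not needed for that step). But in the complementary shallow case your plan collapses exactly where you say it does: when you try to splice an induced path found inside a layer $L_i$ onto the outer BFS path down to $L_i$, nothing prevents vertices of the inner path from having neighbours on the outer path, and there is no mechanism in your sketch for choosing the inner starting vertex to avoid all such chords. The suggested fixes (random sparsification plus Local Lemma, or passing to fractional chromatic number) are speculative; no one currently knows how to make them work, which is precisely why the statement is posed as a conjecture. One small inaccuracy: Conjecture~\ref{conj:aravind} is not ``strictly weaker'' than Conjecture~\ref{conj:nonrainbow} --- it asks for a \emph{rainbow} induced path of length~$\chi$, which is incomparable to a (non-rainbow) induced path of length $c\chi\log\chi$.
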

\begin{conjecture}\label{conj:nonrainbowcycle}
There is some $\const>0$ such that every triangle-free graph of chromatic number $\chi >2$ contains an induced cycle of length at least $\const\chi\log\chi$.
\end{conjecture}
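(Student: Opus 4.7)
The plan is to combine Johansson's theorem, which gives $\chi(G)=O(\Delta/\log\Delta)$ for triangle-free $G$, with a Gy\'arf\'as-style BFS/induced-path argument. Applied to a triangle-free $G$ with $\chi(G)=\chi$, Johansson's theorem yields a vertex $v$ of degree at least $\const_1\chi\log\chi$, and then its neighbourhood $N:=N(v)$ is an \emph{independent} set of this size. The problem thus reduces to exhibiting an induced path $P$ in $G-v$ whose two endpoints lie in $N$, whose interior is disjoint from $N$, and whose length is $\Omega(\chi\log\chi)$; appending $v$ to $P$ closes the required induced cycle.

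To produce $P$, one would first reduce to the $\chi$-critical case (so $\delta(G)\ge\chi-1$) and then perform a BFS from $v$ with layers $L_0=\{v\}, L_1=N, L_2,\dots$. Because $N$ forms one colour class and $\{v\}$ another, the subgraph $G':=G[\bigcup_{i\ge 2}L_i]$ satisfies $\chi(G')\ge\chi-2$, so by Gy\'arf\'as's theorem it contains an induced path of length $\ge\chi-3$; one would then attempt to extend this path at both ends into $N$ while avoiding chords. To lift the length from $\chi$ to $\chi\log\chi$, one would exploit the extra width of $N$ (and, by $\chi$-criticality, of every vertex's neighbourhood) guaranteed by Johansson's theorem, for instance by concatenating several such induced subpaths obtained across successive BFS-layer transitions.

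The main obstacle is twofold. First, the vanilla Gy\'arf\'as theorem yields induced paths of length $\chi$, not $\chi\log\chi$; the logarithmic amplification would require using the Johansson-width not merely at $v$ but throughout $G$, and no such iteration is currently known to go through in the general triangle-free setting — this is essentially why the path version, Conjecture~\ref{conj:nonrainbow}, is itself still open. Second, the closure step demands simultaneous control of the two endpoints of $P$ in $N$ \emph{and} of the non-existence of chords from $N\setminus\{\text{endpoints}\}$ to the interior of $P$, precisely the kind of non-interference that BFS/Gy\'arf\'as constructions do not natively provide. It is this combined amplification-and-closure difficulty that keeps Conjecture~\ref{conj:nonrainbowcycle} open in general, and that restricts the paper's unconditional results to regular girth-$5$ graphs and girth-$21$ graphs, where regularity or large girth absorb the closure step automatically.
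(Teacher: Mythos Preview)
The statement you were given is a \emph{conjecture}, not a theorem: the paper does not prove it, nor does it claim to. What the paper does is state Conjecture~\ref{conj:nonrainbowcycle} as open, observe via Theorem~\ref{thm:offdiagonal} that it would be sharp up to the constant, and then establish the partial results of Theorem~\ref{thm:highergirth} (regular girth~$5$; girth~$21$). Your write-up is therefore not a proof and, appropriately, does not pretend to be one; you correctly recognise that the problem is open and that the paper's unconditional progress is confined to those special cases.

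As a commentary on obstacles your discussion is broadly sound: Johansson gives a vertex $v$ with $|N(v)|=\Omega(\chi\log\chi)$ and $N(v)$ independent, and the amplification from $\chi$ to $\chi\log\chi$ in a Gy\'arf\'as-type argument is precisely where the approach stalls. One small imprecision: for the cycle $v,P,v$ to be induced you do \emph{not} need to forbid edges from $N(v)\setminus\{\text{endpoints of }P\}$ to the interior of $P$ --- such edges are not chords, since those vertices are not on the cycle. What you do need is that $P$ is induced in $G$ (not merely in $G'$), i.e.\ that no interior vertex of $P$ sends an edge back to either endpoint in $N(v)$; and that the interior of $P$ avoids $N(v)$, which already guarantees $v$ has no chord to it. This does not change your overall assessment, but it sharpens where the genuine closure difficulty lies.
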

\noindent
By Theorem~\ref{thm:offdiagonal}, each statement if true is sharp up to the respective choices of $\const$. 
Either but instead with `induced star/tree of size $\const\chi\log\chi$' is true due to Johansson's result on the chromatic number of triangle-free graphs~\cite{Joh96}.  
Conjecture~\ref{conj:nonrainbowcycle} is a quantitative strengthening of a conjecture of Gy\'arf\'as~\cite{Gya87}, and slightly stronger than~\cite[Conj.~7]{HoMc02}. Gy\'arf\'as's conjecture was recently confirmed by Chudnovsky, Scott and Seymour~\cite{CSS17}, but the induced cycle lengths guaranteed in~\cite{CSS17} are very small compared to $\chi\log\chi$; see also~\cite{ScSe18}. 

Perhaps Conjecture~\ref{conj:nonrainbowcycle} is difficult in general, but on the other hand, we have managed to obtain some concrete progress under the additional exclusion of one or more cycle lengths. 
\begin{theorem}\label{thm:highergirth}
\begin{itemize}
\item
There is some $\const>0$ such that every regular $C_4$-free graph of chromatic number $\chi >2$ contains an induced cycle of length at least $\const\chi\log\chi$.
\item
For each $g\ge 5$ every girth $g$ graph of chromatic number $\chi>2$ contains an induced cycle of length at least $3+(\chi-1)(\chi-2)^{\lfloor(g-5)/16\rfloor}$.
\end{itemize}
\end{theorem}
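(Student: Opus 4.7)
The plan splits along the two parts of the statement.

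For the first part (regular $C_4$-free graphs), the approach is to combine Johansson's theorem~\cite{Joh96}, which gives $\chi(G) = O(d/\log d)$ for triangle-free $G$ of maximum degree $d$, with a standard longest-induced-path argument. Since $G$ is $d$-regular, Johansson's bound rearranges to $d = \Omega(\chi \log \chi)$, so it suffices to exhibit an induced cycle of length $\Omega(d)$ in any $d$-regular triangle-free $C_4$-free graph. Take a longest induced path $P = v_0 v_1 \cdots v_\ell$. Its endpoint $v_0$ has $v_1$ as its only on-$P$ neighbour and hence $d-1$ off-path neighbours $u_1, \ldots, u_{d-1}$. Since $P$ is maximal, each $u_j$ must have another on-$P$ neighbour; let $i_j$ be the smallest index of one. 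Triangle-freeness rules out $i_j = 1$, $C_4$-freeness rules out $i_j = 2$, and $C_4$-freeness further prevents two distinct $u_j$'s from sharing an on-$P$ neighbour, so $i_1, \ldots, i_{d-1}$ are pairwise distinct integers at least $3$. Thus some $i_j \ge d+1$, and for this $j$ the cycle $v_0 v_1 \cdots v_{i_j} u_j v_0$ is induced (by inducedness of $P$ and minimality of $i_j$) and of length $i_j + 2 \ge d + 3 = \Omega(\chi \log \chi)$.

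For the second part, the plan is an induction on $k := \lfloor (g-5)/16 \rfloor$, with the strengthened hypothesis that every graph of girth at least $5 + 16k$ and chromatic number $\chi > 2$ contains an induced path of length $(\chi-1)(\chi-2)^k$ that can be closed by three further edges into an induced cycle of length $3 + (\chi-1)(\chi-2)^k$. The base case $k = 0$ should follow from a Gy\'arf\'as-type argument: in a BFS decomposition one locates an induced path of length $\chi - 1$, and using triangle-freeness together with an odd cycle (available because $\chi > 2$) one closes it into an induced cycle of length $\chi + 2$. The inductive step is the heart of the matter: the additional sixteen units of girth at level $k+1$ permit a local ear-substitution along a level-$k$ induced path, wherein each edge $xy$ is replaced by an induced sub-path of length $\chi - 2$ extracted from a ball of radius about $8$ around $xy$, via a local induced-path lemma exploiting the chromatic number within the ball. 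A girth gap of $16 \approx 2 \cdot 8$ is precisely what allows both inducedness of each individual ear and the absence of chords between ears on distinct edges, yielding the length multiplication by $\chi - 2$.

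The principal difficulty lies in the inductive step of the second part: one must formulate and prove a local induced-path lemma robust enough to iterate (controlling both endpoints of each local piece as well as the chromatic number in the enclosing ball), and carry out a careful distance accounting that confirms the $16$-unit girth budget genuinely rules out every potential chord between ears. The first part, by contrast, should reduce to the short direct argument sketched above.
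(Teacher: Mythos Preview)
Your longest-induced-path argument is essentially the paper's Lemma~\ref{le:boundedcodegree} for $t=2$, so that piece is fine. The gap is that you invoke Johansson's theorem, which needs triangle-freeness, and the hypothesis is only $C_4$-freeness: a $C_4$-free graph may contain arbitrarily many triangles. The paper instead observes that in a $C_4$-free graph every neighbourhood induces a matching (any $u\in N(v)$ with two neighbours in $N(v)$ would give $u,v$ two common neighbours), hence has at most $\Delta/2$ edges, and then applies the Alon--Krivelevich--Sudakov bound~\cite{AKS99} for graphs with sparse neighbourhoods to get $\chi=O(\Delta/\log\Delta)$. With that fix your argument goes through; note also that your ``triangle-freeness rules out $i_j=1$'' line is unavailable, but you still get $d-1$ distinct values of $i_j\ge 1$, hence an induced cycle of length $\Omega(d)$.

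\textbf{Second part.} Your plan is genuinely different from the paper's and, as you acknowledge, the hard step (the ``local induced-path lemma'' and the chord accounting for ear-substitution) is not formulated; I do not see how to make the ear replacement rigorous with only a $16$-unit girth increment while controlling the chromatic number locally. The paper avoids all of this by working with minimum degree rather than chromatic number: first pass to a subgraph of minimum degree $d=\chi-1$, then (following K\"uhn--Osthus~\cite{KuOs03}) take a maximal $(2k{+}1)$-separated set $X$, grow the disjoint radius-$k$ balls into a partition by induced trees $T(x)$, and contract each tree. The girth budget $g\ge 16k+5$ is spent as follows: $g\ge 4k+2$ makes each $T(x)$ an induced tree with $\ge d(d-1)^{k-1}$ leaves; $g\ge 8k+3$ forces at most one edge between any two trees, so the minor $G'$ has minimum degree $\ge d(d-1)^k$; and $g\ge 16k+5$ makes $G'$ have girth $\ge 5$. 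One then applies Lemma~\ref{le:boundedcodegree} with $t=2$ (plus the girth-$5$ condition) to $G'$ to get an induced cycle of length $\ge 3+d(d-1)^k$ there, which lifts to an induced cycle in $G$ because adjacency in $G'$ means exactly one edge between the corresponding trees. Substituting $d=\chi-1$ gives the stated bound. This contraction-to-a-minor argument replaces your unproven inductive step entirely.
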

\noindent
This in particular implies that Conjecture~\ref{conj:nonrainbowcycle} holds for regular girth $5$ graphs and for girth $21$ graphs. We have not made much effort to optimise the constant $21$, for the method we use seems unlikely to reduce it below $13$ or so. 
   Theorem~\ref{thm:highergirth} also asserts that each girth $5$ graph has an induced cycle of length at least $\chi+2$, which is is not far from best possible since the $C_4$-free process~\cite{BoKe10} yields $n$-vertex $\chi$-chromatic $C_4$-free graphs with independence number $O((n \log n)^{2/3})=O((\chi \log \chi)^2)$.

Johansson's~\cite{Joh96} and Conjecture~\ref{conj:nonrainbow} together naturally prompt another possibility.

\begin{conjecture}~\label{conj:generalisedJohansson}
There is some $\const>0$ such that for every forest $H$, every triangle-free graph containing no induced $H$ has chromatic number at most $\const |V(H)|/\log|V(H)|$.
\end{conjecture}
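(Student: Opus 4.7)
The plan is to combine Johansson's theorem with a greedy induced embedding of the forest $H$, in the spirit of the authors' argument for regular girth-$5$ graphs. Write $D=|V(H)|$ and assume for contradiction that $\chi:=\chi(G)>cD/\log D$ for a constant $c$ to be chosen.

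First, I would pass to a $\chi$-vertex-critical subgraph $G'$ of $G$, so that $\delta(G')\ge \chi-1=\Omega(D/\log D)$; since $G'$ is an induced subgraph of $G$, it is still triangle-free and still contains no induced copy of $H$. Johansson's theorem applied to $G'$ gives $\chi=O(\Delta(G')/\log\Delta(G'))$, hence $\Delta(G')=\Omega(\chi\log\chi)=\Omega(D)$. So $G'$ has vertices of degree at least some constant multiple of $D$, which is the starting point for trying to exhibit an induced copy of $H$.

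I would then attempt to embed $H$ greedily. Fix a root in each component of $H$ and a BFS ordering $h_1,\ldots,h_D$ so that each non-root $h_i$ has a unique earlier neighbour, its parent. At step $i$, the goal is to choose an image $v_i$ adjacent to the image of the parent (for non-roots), or simply a fresh vertex of high enough degree (for roots), subject to being non-adjacent to every $v_j$ with $j<i$ apart from the parent. Crucially, in the triangle-free host $G'$, siblings of the same branch are automatically non-adjacent, so the only obstructions are long-range cross-branch edges to vertices $v_j$ whose tree-distance from $h_i$ in $H$ is at least two.

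The main obstacle is bounding the forbidden set at each greedy step. A single $v_j$ could forbid up to $\Delta(G')$ vertices of $N(v_{\mathrm{parent}(i)})$, so a direct count would require codegrees in $G'$ to be much smaller than $\Delta(G')/D$, a condition which is false in general triangle-free graphs. I would try to get around this by first extracting from $G'$ a subgraph of chromatic number $\Omega(\chi)$ that is approximately $C_4$-free, for instance by iteratively deleting vertices incident to many $4$-cycles, or by a R\"odl-nibble / Kim--Vu style concentration argument producing a locally tree-like substructure of comparable chromatic number. On such a substructure all codegrees are small, each greedy step leaves a positive fraction of the neighbourhood available, and the induced copy of $H$ can be completed. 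This codegree-reduction step is where essentially all the work lies, and it may well require a quantitative refinement of the Johansson--Molloy entropy-compression techniques rather than just a black-box application of them; this mirrors exactly the gap between the regular girth-$5$ case the authors handle and the full triangle-free conjecture.
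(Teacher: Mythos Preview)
The statement you are trying to prove is labelled as a \emph{conjecture} in the paper and is not proved there; the paper only establishes the special case of regular girth~$5$ graphs (Corollary~\ref{cor:generalisedJohanssonForRegularGirth5}), via Lemma~\ref{le:rootedforests}. So there is no ``paper's own proof'' to compare against, and your proposal should be read as an attack on an open problem.

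As such, the genuine gap is exactly the one you flag yourself: the codegree-reduction step. In a triangle-free graph two non-adjacent vertices can share arbitrarily many neighbours (think of $K_{n,n}$, or more pertinently of Kneser-type constructions with high chromatic number), so at a greedy step the forbidden set inside $N(v_{\mathrm{parent}(i)})$ may swallow the entire neighbourhood. Your suggested remedies do not work as stated: iteratively deleting vertices lying in many $4$-cycles can annihilate the whole graph (again $K_{n,n}$), and there is no known R\"odl-nibble or entropy-compression argument that extracts from an arbitrary triangle-free graph a subgraph of comparable chromatic number with bounded codegrees. Indeed, producing such a locally tree-like substructure would already be a major advance, likely yielding Conjecture~\ref{conj:nonrainbow} as well. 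What the paper's girth~$5$ argument really uses is that \emph{every} codegree is at most~$1$, which makes the greedy embedding go through with no losses; bridging from girth~$5$ to girth~$4$ is precisely the content of the conjecture, and your outline does not supply the missing idea.
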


\noindent  
 If true, Conjecture~\ref{conj:generalisedJohansson} would constitute a common generalisation of Johansson's theorem, Conjecture~\ref{conj:nonrainbow} and the fact that $\chi(G)=O(\alpha(G)/\log\alpha(G))$ for every triangle-free graph $G$, corresponding to the cases where $H$ is a star, a path and an independent set, respectively. 
 The conclusion of Conjecture~\ref{conj:generalisedJohansson} would fail if some $H$ were allowed to contain a cycle, since for each $\ell\geq 3$ there are $C_\ell$-free graphs of arbitrarily large chromatic number.
 Note also that to prove Conjecture~\ref{conj:generalisedJohansson} it suffices (by adding a single vertex connected to all components if needed) to prove it for all trees $H$.
 As a first step, we have proved a form of Conjecture~\ref{conj:generalisedJohansson} for regular girth $5$ graphs (see Corollary~\ref{cor:generalisedJohanssonForRegularGirth5}). 

%

\section{Large rainbow independent sets}\label{sec:proof}

\begin{proof}[Proof of Theorem~\ref{thm:main}]
We carry out an induction on $r\ge 3$.
Let $G$ be a $K_r$-free graph of chromatic number $\chi$ and let $c: V(G)\to {\mathbb Z}^+$ be a proper colouring. We seek a rainbow independent set of size $\lceil\frac12\chi^{1/(r-2)}\rceil$.
Initialise $G'=G$ and $X=\emptyset$, and iterate the following until $G'$ is empty (if needed).
\begin{enumerate}
\item Take an arbitrary vertex $v\in V(G')$ and add it to $X$.
\item Let $S=c^{-1}(c(v))$ and delete the vertices of $S$ from $G'$.
\item Let $N=N_{G'}(v)$ and consider the subgraph $G'[N]$ of $G'$ induced by $N$.
\begin{enumerate}
\item\label{stop} If $\chi(G'[N]) > \chi^{(r-3)/(r-2)}$, then stop the procedure by outputting the largest rainbow independent set in $G'[N]$.
\item Otherwise, delete the vertices of $N$ from $G'$.
\end{enumerate}
\end{enumerate}
Note that if $r=3$, then the condition in~\ref{stop} is vacuous, in which case we are directly proving the base case.
If on the other hand the procedure stops in~\ref{stop} (and so $r\ge 4$), then since $G'[N]$ is $K_{r-1}$-free it contains a rainbow independent set of size $\lceil \frac12\chi(G'[N])^{1/(r-3)} \rceil \ge \lceil\frac12\chi^{1/(r-2)}\rceil$ by induction, in which case we are done.

If the procedure continues until $G'$ is empty, then by construction the final set $X$ is a rainbow independent set, and so it suffices to show that $|X| \ge \frac12\chi^{1/(r-2)}$.
To this end, let $S_i$ and $N_i$ be the vertex subsets $S$ and $N$ respectively in iteration $i\in \{1,\dots,|X|\}$.
Since $\chi(G'[N_i]) \le \chi^{(r-3)/(r-2)}$ for every $i$, $V(G) = \cup_i (S_i\cup N_i)$ certifies a proper colouring of $G$ with at most $|X|(1+\chi^{(r-3)/(r-2)})$ colours. Thus $|X|(1+\chi^{(r-3)/(r-2)}) \ge \chi$ and so $|X|\ge \chi/(1+\chi^{(r-3)/(r-2)}) \ge \frac12\chi^{1/(r-2)}$, as promised.
\end{proof}

We remark that the same argument, i.e.~performing the algorithm above applied to the binomial random graph $G_{n,p}$ (together with standard facts about the model), yields the following result. It is close to best possible: in the first regime it is sharp up to a constant factor, in the second up to a $\log n$ factor.
Recall that a property in $G_{n,p}$ is said to hold {\em asymptotically almost surely (a.a.s.)} if it holds with probability tending to one as $n\to\infty$.

\begin{theorem}\label{thm:random}
	Let $p=p(n)$ satisfy $np=\omega(1)$ and $p=o(1)$.
	\begin{itemize}
		\item 
		Given $1/2 < c\le 1$, suppose $p =o(n^{-c})$ as $n\to\infty$. 
		Then a.a.s.~for any proper colouring of $G_{n,p}$, there is a rainbow independent set of size $\Omega(\chi(G_{n,p}))$.
		\item Suppose $p = \omega(\sqrt{ (\log n)/n})$ as $n\to\infty$.
		Then a.a.s.~for any proper colouring of $G_{n,p}$, there is a rainbow independent set of size 
		$\Omega(1/p)$. 
	\end{itemize}
\end{theorem}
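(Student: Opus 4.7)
The plan is to rerun the iterative procedure from the proof of Theorem~\ref{thm:main} in a form that does not rely on any $K_r$-free hypothesis: starting from $G' = G = G_{n,p}$ and $X = \emptyset$, I repeatedly pick any vertex $v \in V(G')$, add $v$ to $X$, and delete from $G'$ both the colour class $c^{-1}(c(v))$ and the neighbourhood $N_{G'}(v)$; iterate until $G'$ is empty. By exactly the same bookkeeping as in the proof of Theorem~\ref{thm:main}, the output $X$ is a rainbow independent set, and the sets $S_i \cup N_i$ are pairwise disjoint with union $V(G)$, which gives
\[
n \;=\; \sum_i \bigl(|S_i|+|N_i|\bigr) \;\le\; |X|\,\bigl(\alpha(G)+\Delta(G)\bigr),
\]
and hence the deterministic bound $|X| \ge n/\bigl(\alpha(G) + \Delta(G)\bigr)$ for every graph $G$ and every proper colouring of it.

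The rest of the proof reduces to the standard a.a.s.\ asymptotics for $G_{n,p}$ in the range $np = \omega(1)$, $p = o(1)$: namely $\alpha(G_{n,p}) = (2+o(1))\log(np)/p$ and $\chi(G_{n,p}) = (1+o(1))np/(2\log(np))$ a.a.s., together with the crude Chernoff/union-bound estimate $\Delta(G_{n,p}) = O(np)$ a.a.s.

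In the first regime, $p = o(n^{-c})$ with $c > 1/2$ forces $np^2 = o(1)$, so a.a.s.\ $\Delta = O(np) = o\bigl(\log(np)/p\bigr) = o(\alpha)$; hence $\alpha + \Delta \sim \alpha$, and
\[
|X| \;\ge\; \frac{(1-o(1))n}{\alpha(G)} \;=\; (1-o(1))\frac{np}{2\log(np)} \;=\; \Omega\bigl(\chi(G_{n,p})\bigr).
\]
In the second regime, $p = \omega(\sqrt{(\log n)/n})$ forces $np^2 = \omega(\log n)$, giving a.a.s.\ $\alpha = o(np) = o(\Delta)$ together with the sharper $\Delta = (1+o(1))np$ (since here $np = \omega(\log n)$), so that $|X| \ge (1-o(1))/p = \Omega(1/p)$.

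The main obstacle, once the structural inequality $|X| \ge n/(\alpha + \Delta)$ is in hand, will be merely curatorial: selecting the right form of a.a.s.\ estimate in each regime, with some care in the very sparse range $np = O(\log n)$ where $\Delta$ does not concentrate around $np$. This does not actually hurt the argument, since the condition $np^2 = o(1)$ of the first regime means any a.a.s.\ bound of the form $\Delta = O(np)$ is already swamped by $\alpha$, and in the second regime we are safely in the range $np = \omega(\log n)$ where the usual $(1+o(1))np$ concentration of $\Delta$ holds.
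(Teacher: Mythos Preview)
Your argument is correct, and for the second regime it is essentially identical to the paper's: both bound $|S_i|+|N_i|$ by $\alpha(G)+\Delta(G)\le 3np$ and conclude $|X|\ge n/(3np)=\Omega(1/p)$.

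For the first regime, however, you take a genuinely different route. The paper does \emph{not} use the size bound $|S_i|\le\alpha(G)$; instead it stays with the colouring bookkeeping of Theorem~\ref{thm:main}. Concretely, the paper shows (via Chernoff and a union bound) that a.a.s.\ every neighbourhood $G[N(v)]$ has maximum degree at most $C:=5/(2c-1)$, hence $\chi(G'[N_i])\le C$ for every $i$; then $\bigcup_i(S_i\cup N_i)$ certifies a proper colouring with at most $|X|(1+C)$ colours, giving $|X|\ge \chi(G_{n,p})/(1+C)$ directly. Your approach instead derives the single deterministic inequality $|X|\ge n/(\alpha+\Delta)$ and then imports the a.a.s.\ asymptotics $\alpha\sim 2\log(np)/p$, $\chi\sim np/(2\log(np))$, and $\Delta=o(\alpha)$. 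The paper's route is more self-contained (it uses only Chernoff, not the concentration of $\chi(G_{n,p})$ or $\alpha(G_{n,p})$) and yields an explicit constant; yours is more uniform across the two regimes and avoids any analysis of the local structure of neighbourhoods, at the cost of invoking heavier off-the-shelf results about $G_{n,p}$.

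Two small remarks. First, the sets $S_i\cup N_i$ need not be pairwise disjoint (a vertex deleted as part of some $N_j$ can still lie in the full colour class $S_i$ for a later $i$); but you only need $n\le\sum_i(|S_i|+|N_i|)$, which follows since the \emph{deleted-at-step-$i$} sets partition $V(G)$ and each has size at most $|S_i|+|N_i|$. Second, in the very sparse range $np=o(\log n)$ the crude bound $\Delta=O(np)$ fails; the correct replacement is $\Delta=O(\log n)$ a.a.s., and since $p=o(n^{-c})$ with $c>1/2$ gives $\log(np)/p=\omega(n^{c})\gg\log n$, one still has $\Delta=o(\alpha)$ as you need. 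You flag this issue but do not quite supply the missing estimate.
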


\begin{proof}
	In the first regime, let $C=5/(2c-1).$ 
	We first prove the observation that for every vertex $v$ in $G_{n,p}$, the probability that its neighbourhood induces a graph with maximum degree at least $C$ is at most $2n^{-5}$ as $n\to\infty$.
	Note that this probability increases as $p$ increases, so it is sufficient to prove the statement when $p=n^{-c}/2$ where $1/2<c<1$.
	By the Chernoff bound we know that for $n$ sufficiently large $$\Prob(\deg(v)> 2np) < \exp(-np/3)=\exp(-n^{1-c}/6) <n^{-5}.$$
	The probability that a neighbour $u \in N(v)$ has degree at least $C$ is bounded by 
	$$\binom{ \deg(v) -1 }C p^C \le  (p \deg(v) )^C  .$$
	So if $ \deg(v) \le 2np=n^{1-c},$ then this is bounded by $n^{(1-2c)C}=n^{-5}.$
	
	This observation implies that with probability at least $1-2n^{-4}$ we have $\chi(G[N(v)])<C$ for every vertex $v$ and hence the algorithm gives a rainbow independent set $X$ of size at least $\chi(G_{n,p})/(1+C).$

	In the second regime, note that by the Chernoff bound a.a.s.~every degree of the graph is bounded by $2np.$ Also a.a.s.~the independence number is at most $2p^{-1}\log np<np$.
	Hence a.a.s.~we have 
	$\lvert S_i\rvert + \lvert N_i \rvert  < 3np$ for every $i$ in the algorithm and hence $\lvert X \rvert \ge 1/(3p).$
\end{proof}

\section{Chromatic discrepancy}\label{sec:discrepancy}

In related work, the first author together with Kalyanasundaram, Sandeep and Sivadasan~\cite{AKSS15} studied the notion of {\em chromatic discrepancy}, the least over all proper colourings of the greatest difference between size and induced chromatic number taken over all rainbow subgraphs.
Starting with some triangle-free graph of chromatic number $\chi$ and iterating Theorem~\ref{thm:main}, each time extracting from what remains a large rainbow independent set and all associated colour classes, one finds an induced subgraph $H$ that is rainbow with at least $\chi$ colours and such that $\chi(H) \le \log_2\chi+1$.
\begin{theorem}\label{thm:trianglefreediscrepancy}
Every properly coloured triangle-free graph of chromatic number $\chi$ contains a rainbow induced subgraph on $\chi$ vertices of chromatic number at most $\log_2\chi+1$. \qed
\end{theorem}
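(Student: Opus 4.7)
The plan is to iterate the $r=3$ case of Theorem~\ref{thm:main}, as sketched in the paragraph preceding the statement, halting as soon as $\chi$ distinct colours have been accumulated. Concretely, I will set $G_0:=G$ and $S_0:=0$, and while $S_{i-1}<\chi$ I will apply Theorem~\ref{thm:main} to the properly coloured triangle-free graph $G_{i-1}$ to obtain a rainbow independent set $I_i$ of size $\lceil\chi(G_{i-1})/2\rceil$, then let $G_i$ be the result of deleting from $G_{i-1}$ every vertex whose colour appears in $I_i$, and set $S_i:=S_{i-1}+|I_i|$. The output $H$ is $I_1\cup\cdots\cup I_t$, where $t$ is the number of iterations, restricted to any $\chi$ of its vertices.

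I would then check three features of $H$ before restriction. First, since each iteration deletes every vertex of every colour used in the corresponding $I_i$, different $I_i$ use disjoint colour sets, so $H$ is rainbow and induced in $G$. Second, $|H|=S_t\ge\chi$ by the stopping rule, so restriction to $\chi$ vertices does not spoil rainbowness or inducedness. Third, labelling every vertex of $I_i$ by $i$ yields a proper $t$-colouring of $H$, so $\chi(H)\le t$, and the same bound passes to any restriction.

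The technical heart is bounding $t\le\log_2\chi+1$. I will track the potential $T_i:=\chi-S_i$, with $T_0=\chi$ and the loop running exactly while $T_{i-1}\ge1$. The key input is the standard ``extension'' fact that deleting $S_i$ colour classes of $c$ can decrease the chromatic number by at most $S_i$, hence $\chi(G_i)\ge\chi-S_i=T_i$; combining this with the size guarantee of Theorem~\ref{thm:main} gives $|I_{i+1}|\ge\lceil T_i/2\rceil$, hence $T_{i+1}\le\lfloor T_i/2\rfloor$, and iterating yields $T_{i-1}\le\chi/2^{i-1}$. Combining this with the loop condition $T_{i-1}\ge1$ forces $t\le\log_2\chi+1$, as required.

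There is no real obstacle; given Theorem~\ref{thm:main}, this is essentially a straightforward iteration paired with a matching potential-function analysis. The only mildly delicate point is the chromatic lower bound $\chi(G_i)\ge\chi-S_i$, which prevents the chromatic number from collapsing prematurely and thereby certifies that every iteration contributes a substantial new batch of colours.
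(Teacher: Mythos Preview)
Your proposal is correct and is exactly the argument the paper has in mind: iterate the $r=3$ case of Theorem~\ref{thm:main}, each time removing all colour classes meeting the current rainbow independent set, and bound the number of rounds by tracking the remaining colour deficit. The only point worth a word of care is the inequality $\chi(G_i)\ge\chi-S_i$, which you justify correctly via the fact that restoring $S_i$ deleted colour classes (each independent) can raise the chromatic number by at most $S_i$.
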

\noindent
In other words, the chromatic discrepancy of any triangle-free graph of chromatic number $\chi$ is at least $\chi-\log_2\chi-1$.
 It is an open question whether the logarithmic term can be reduced to some constant independent of $\chi$.
 It was conjectured~\cite[Qu.~4]{AKSS15} that $\chi-\omega$ is a lower bound on the chromatic discrepancy for any graph of chromatic number $\chi$ and clique number $\omega$.  Corollary~\ref{cor:offdiagonal} refutes this for every fixed $\omega\ge 3$; however, iterating Theorem~\ref{thm:main} in the same way as above yields $(1-o(1))\chi$ chromatic discrepancy as $\chi\to\infty$.
 
Similarly iterating Theorem~\ref{thm:random} yields the following for chromatic discrepancy of $G_{n,p}$, an improvement upon~\cite[Thm.~4.6]{AKSS15}. 

\begin{theorem}\label{thm:randomdiscrepancy}
	Let $p=p(n)$ satisfy $np=\omega(1)$ and $p=o(1)$.
	\begin{itemize}
		\item Given $1/2 < c<1$, suppose $p =o(n^{-c})$ as $n\to\infty$.
		Then a.a.s.~for any proper colouring of $G_{n,p}$, there is a rainbow induced subgraph on $\chi(G_{n,p})$ vertices of chromatic number at most $O(\log \chi(G_{n,p}))$.
		\item 
		Given $0\le c<1$, suppose $p = \omega(n^{-c})$ as $n\to\infty$.
		Then a.a.s.~for any proper colouring of $G_{n,p}$, there is a rainbow induced subgraph on $\chi(G_{n,p})$ vertices of chromatic number at most
		\[O(-\log p \cdot \max{\left\{p, \sqrt{ (\log n)/(np)}\right\} }   \chi(G_{n,p})).\] 
	\end{itemize}
\end{theorem}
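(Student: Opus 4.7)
The plan is to mimic the derivation of Theorem~\ref{thm:trianglefreediscrepancy} from Theorem~\ref{thm:main}, with Theorem~\ref{thm:random} now playing the role of Theorem~\ref{thm:main}. Given a proper $\chi$-colouring $c$ of $G_{n,p}$, set $G_0 := G_{n,p}$ and iteratively construct $G_j$ from $G_{j-1}$ by (i) applying the algorithm underlying Theorem~\ref{thm:random} to $G_{j-1}$ with the inherited colouring, producing a rainbow independent set $I_j$; and (ii) deleting from $G_{j-1}$ every colour class met by $I_j$. Continue until $G_k$ is empty. By construction $H := \bigcup_j I_j$ is a rainbow induced subgraph of $G_{n,p}$ with exactly $\chi(G_{n,p})$ vertices (since all colour classes are eventually removed), and is properly coloured by the assignment $v\in I_j \mapsto j$, so $\chi(H)\le k$. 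It therefore suffices to bound the number of outer iterations $k$.

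The iteration is legitimate because the a.a.s.\ random-graph properties that drive the proof of Theorem~\ref{thm:random} are all monotone under vertex deletion. In the first regime one uses that $\chi(G[N(v)])<C$ at every vertex $v$ for some constant $C=C(c)$; in the second, that $\Delta(G_{n,p})\le 2np$ and $\alpha(G_{n,p})\le 2p^{-1}\log(np)$. Each of these descends to every induced subgraph $G_{j-1}$, so the inner algorithm, and hence Theorem~\ref{thm:random}'s lower bound on $|I_j|$, can be invoked at every step.

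For the first regime this yields $|I_j|\ge \chi(G_{j-1})/(1+C)$, whence $\chi(G_j)\le \chi(G_{j-1})\cdot C/(C+1)$ decays geometrically and $k = O(\log \chi(G_{n,p}))$ iterations suffice, matching the claim. For the second regime one has only the weaker $|I_j|\ge n_{j-1}/(3np)$, where $n_{j-1}=|V(G_{j-1})|$, together with the per-iteration vertex decrement $n_{j-1}-n_j\le \alpha|I_j|$ governed by $\alpha\le 2p^{-1}\log(np)$. I expect the technical heart of the proof to be converting these two per-step inequalities into the stated bound $k = O(-\log p\cdot\max\{p,\sqrt{(\log n)/(np)}\}\cdot\chi(G_{n,p}))$: the natural route is a dyadic bookkeeping that groups iterations by the order of magnitude of $n_{j-1}$, where the resulting $O(-\log p)$ number of phases accounts for the $-\log p$ factor and the maximum inside records whether the degree bound $2np$ or the independence-number bound $2p^{-1}\log(np)$ dominates $|S_i|+|N_i|$ in the inner algorithm. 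Calibrating this phased analysis so that the two branches of the $\max$ emerge cleanly, rather than losing extra logarithmic factors, is the main obstacle.
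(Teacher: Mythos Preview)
For the first regime your outline is correct and matches the paper's argument exactly: the same iterated extraction, the same hereditary bound $\chi(G[N(v)])<C$, and the same geometric decay giving $k=O(\log\chi)$.

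For the second regime, however, you have misidentified the key input. The paper does \emph{not} iterate the second-regime estimate of Theorem~\ref{thm:random} (which used $\alpha(G_{n,p})\le 2p^{-1}\log(np)$ and $\Delta\le 2np$ to obtain $|X|\ge 1/(3p)$). Instead it reverts to the first-regime template of bounding the chromatic number of every neighbourhood, now via the Shamir--Spencer concentration inequality: a.a.s.\ one has $\chi(G[N(v)])\le f(n,p):=\mathbb{E}(\chi(G_{2np,p}))+\sqrt{8np\log n}$ uniformly over all $v$. This yields $|I_j|\ge \chi(G_{j-1})/(f(n,p)+1)$, a fixed fraction of the \emph{remaining colours} rather than the remaining vertices, so $\chi(G_j)$ decays geometrically and only $O\bigl((-\log p)\,f(n,p)\bigr)$ outer iterations are needed to cover $(1-p)\chi$ colours. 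The two branches of the $\max$ in the statement are exactly the two summands of $f(n,p)$. A final top-up step then adds one vertex from each of the $\le p\chi$ leftover colour classes.

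Your proposed route tracks only $|I_j|\ge n_{j-1}/(3np)$ and $n_{j-1}-n_j\le\alpha|I_j|$, quantities in terms of vertex counts; the decay you obtain is in $n_j$, not in $\chi_j$, and there is no evident mechanism by which the Shamir--Spencer deviation term $\sqrt{(\log n)/(np)}$ would emerge from $\Delta$ and $\alpha$ alone. The obstacle you flag is not one of calibration but of a missing probabilistic ingredient; the dyadic bookkeeping you sketch is neither carried out nor, as far as I can see, capable of reaching the stated bound.
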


\begin{proof}
	We will essentially iterate the algorithm in Theorem~\ref{thm:main}.
	
	Initialise $G''=G_{n,p}$ and $Y=\emptyset$, and iterate the following until $Y$ contains at least $\chi(G_{n,p})$ vertices.
	
\begin{enumerate}
	\item Initialise $G'=G''$ and $X=\emptyset$, and iterate the following until $G'$ is empty.
		\begin{enumerate}
			\item Take an arbitrary vertex $v\in V(G')$ and add it to $X$.
			\item Let $S=c^{-1}(c(v))$ and delete the vertices of $S$ from $G'$.
			\item Delete the vertices of $N_{G'}(v)$ from $G'$.
		\end{enumerate}
	\item Delete the vertices in $c^{-1}(c(X))$ from $G''.$
	\item Add the vertices from $X$ to $Y$.
\end{enumerate}
	
	In the first regime, we have seen in the proof of Theorem~\ref{thm:random} that a.a.s.~in every step in the algorithm the chromatic number of $N_i$ is at most $C=5/(2c-1)$. So in every iteration, we have selected at least $\frac 1{C+1} \chi(G'')$ vertices.
	This implies that we need to perform at most $\log \chi(G_{n,p})/\log(1+\frac 1C)$ iterations to create a rainbow induced subgraph on $\chi(G_{n,p})$ vertices.
	
	In the second regime, a.a.s.~every vertex has degree at most $2np$ and the chromatic number of every neighbourhood is bounded by
	\[f(n,p):=\mathbb{E}(\chi(G_{2np,p}))+\sqrt{8np\log n},\]
due to a result of Shamir and Spencer~\cite{SS87}. Also $\chi(G_{n,p})\sim\frac{np}{2\log{np}}$ a.a.s.
	So in every iteration of the algorithm, we have selected at least $\frac 1{f(n,p)+1} \chi(G'')$ vertices.
	So it takes $$O\left( -\log p \cdot (f(n,p)+1) \right)$$ iterations to select at least $\chi(G_{n,p}) - p \chi(G_{n,p}) $ vertices.
	If $\mathbb{E}(\chi(G_{2np,p}))>\sqrt{8np\log n}$, we have $\mathbb{E}(\chi(G_{2np,p})) \sim \frac{ 2np^2}{2 \log{np^2}} = O(p \chi(G_{n,p}) ).$
	In the other case, we have $f(n,p)=O( \sqrt{ (\log n)/(np)} \chi(G_{n,p})).$
	
	After that, at most $p \chi(G_{n,p}) $ additional distinctly-coloured vertices are needed to form a rainbow induced subgraph on $\chi(G_{n,p})$ vertices, the resulting graph having chromatic number $O( -\log p \cdot p \chi(G_{n,p}) )$.
\end{proof}

\section{Long induced paths and cycles}
\label{sec:highergirth}

This section is devoted to establishing Theorem~\ref{thm:highergirth} and related results.

\begin{lemma}\label{le:boundedcodegree}
For each $t,d \ge 2$, every $K_{2,t}$-free graph of minimum degree $d$ contains induced cycles of $\lceil \frac{d-1}{t-1} \rceil$ distinct lengths, and, in particular, some induced cycle of length at least $2+\lceil \frac{d-1}{t-1} \rceil$. 
\end{lemma}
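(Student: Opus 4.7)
The plan is to exploit the off-path neighbors of one endpoint of a longest induced path in $G$, turning each such neighbor into an induced cycle, and to use $K_{2,t}$-freeness to limit how many of these cycles share a length.

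Let $P = v_0 v_1 \cdots v_\ell$ be a longest induced path in $G$. Since $P$ is induced, $v_0$'s only neighbor on $P$ is $v_1$, so $U := N(v_0) \setminus V(P)$ has size at least $d - 1$. For each $u \in U$, the maximality of $P$ forces $u$ to have at least one neighbor on $P$ besides $v_0$: otherwise $u v_0 v_1 \cdots v_\ell$ would be an induced path longer than $P$. I would then set $i(u) := \min\{j \ge 1 : u \sim v_j\}$, which is therefore well-defined.

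I next claim that the vertex set $\{v_0, v_1, \ldots, v_{i(u)}, u\}$ induces a cycle of length $i(u) + 2$. The initial segment $v_0 v_1 \cdots v_{i(u)}$ of $P$ is itself an induced path, and the minimality of $i(u)$ rules out any edge $u v_r$ for $1 \le r \le i(u) - 1$; the only remaining edges beyond the path are $u v_0$ and $u v_{i(u)}$, which close up the cycle without introducing a chord.

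The key quantitative step now uses the $K_{2,t}$-freeness of $G$: for each fixed $j \ge 1$, any $u \in U$ with $i(u) = j$ lies in $N(v_0) \cap N(v_j)$, and so
\[ |\{u \in U : i(u) = j\}| \le |N(v_0) \cap N(v_j)| \le t - 1. \]
Therefore the $|U| \ge d - 1$ values of $i(u)$ take at least $\lceil (d-1)/(t-1) \rceil$ distinct positive integer values, producing induced cycles of that many distinct lengths. Since any set of that many distinct positive integers has maximum at least its cardinality, some $i(u)$ is at least $\lceil (d-1)/(t-1) \rceil$, giving the claimed induced cycle of length at least $2 + \lceil (d-1)/(t-1) \rceil$.

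No step poses a serious obstacle; the argument is essentially careful bookkeeping. The main judgement call is the choice of extremal object — a longest induced path — and the pivot idea of indexing each off-path neighbor $u$ by its first "return" point $v_{i(u)}$ on $P$. Once this is in place, $K_{2,t}$-freeness is applied in its most direct form, namely $|N(v_0) \cap N(v_j)| \le t - 1$. If anything calls for verification it is the induced-cycle claim (where the minimality of $i(u)$ must be used to rule out chords) and the fact that $i(u) = \ell$ is a permitted edge case when $u$ is adjacent only to $v_0$ and $v_\ell$ among vertices of $P$.
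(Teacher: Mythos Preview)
Your proof is correct and follows essentially the same approach as the paper: take a longest induced path, consider the off-path neighbours of an endpoint, assign each one its first return index on the path, and use $K_{2,t}$-freeness to bound how many can share an index. The only differences are cosmetic (indexing from $0$ versus $1$, and your slightly more explicit verification that the resulting cycles are induced).
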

\begin{proof}
Let $P=p_1,p_2,\ldots $ be an induced path in $G$ of maximal length. Its first vertex $p_1$ has at least $d-1$ neighbours in $V(G)\backslash V(P)$; let us call them the \emph{pending vertices}. By maximality of $P$, each pending vertex has at least one neighbour in $V(P)\backslash \left\{p_1\right\}$. For a pending vertex $v$, we say  $p_j\in V(P)\backslash\left\{p_1\right\}$ is the \emph{first neighbour of $v$} if $vp_j \in E(G)$ and $vp_i \notin E(G)$  for every $2\le i < j$. Note that in that case $v,p_1,\ldots,v_{j-1},v_j$ is an induced cycle of length $j+1$.
At most $t-1$ pending vertices can have a common first neighbour in $P\backslash \left\{p_1\right\}$, since otherwise $K_{2,t}$ would be a subgraph of $G$. It follows that at least $\frac{d-1}{t-1}$ distinct vertices in $V(P)\backslash \left\{p_1\right\}$ are the first neighbour of some pending vertex. Thus $G$ has induced cycles of at least $\frac{d-1}{t-1}$ distinct lengths.
\end{proof}

It turns out there exist induced cycles of length exponential in the girth.

\begin{theorem}\label{thm:largegirth}
For each $k\ge 0$, 
every graph of girth at least $16k+5$ and minimum degree $d\geq 2$ contains an induced cycle of length at least 
\(
3+d(d-1)^{k}.
\)
In particular, if $k\ge 1$, it contains an induced cycle of length $\Omega(d^2)$. 
\end{theorem}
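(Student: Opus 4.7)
The plan is to prove Theorem~\ref{thm:largegirth} by induction on $k$, exploiting the fact that the target length $f(k) := 3 + d(d-1)^k$ satisfies the recursion $f(k) - 3 = (d-1)\bigl(f(k-1) - 3\bigr)$. This recursion hints that the inductive step should replace each edge of a cycle of length $f(k-1)$ by an induced path of length $d-1$ in some suitable sense.

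For the base case $k=0$, we have girth at least $5$ and minimum degree $d \ge 2$, and we seek an induced cycle of length at least $d + 3$. This mildly strengthens Lemma~\ref{le:boundedcodegree} in the girth-$5$ regime (where that lemma only gives $d+1$). In a longest induced path $p_1, \ldots, p_\ell$, consider the $\ge d - 1$ off-path neighbors of $p_1$: by triangle-freeness each has first neighbor on the path at index $\ge 3$, and by $C_4$-freeness these first neighbors are pairwise distinct. A symmetric analysis at $p_\ell$, combined with a pigeonhole argument relating the first neighbors at both endpoints, squeezes out the required induced cycle of length at least $d + 3$.

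For the inductive step ($k-1 \to k$), suppose $G$ has girth at least $16k+5$ and minimum degree $d$. The girth slack of $16$ is enough for BFS balls of radius $8$ around any vertex to be trees. I would construct an auxiliary graph $G'$ whose vertices are suitably chosen endpoints of length-$(d-1)$ induced paths in $G$ emanating from BFS trees, and whose edges encode adjacency through these shared tree neighborhoods. The construction is arranged so that $G'$ has girth at least $16(k-1) + 5$ and minimum degree at least $d$. Applying the inductive hypothesis to $G'$ yields an induced cycle in $G'$ of length at least $f(k-1)$; this cycle is then lifted back to $G$ by replacing each of its edges by an induced path of length $d-1$ (except for three edges, which are kept as-is in accordance with the recursion above), producing an induced cycle in $G$ of length $(d-1)(f(k-1) - 3) + 3 = f(k)$.

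The main obstacle is the inductive step. Designing $G'$ so that its girth and minimum degree simultaneously meet the inductive hypothesis is delicate: the $16$ units of girth slack per step correspond to a BFS radius of $8$ on each side of an `edge' of $G'$, which is precisely what is required to prevent stray chords from appearing during the lift. Verifying that the lifted closed walk is actually chord-free, hence an induced cycle rather than just a cycle, is where the geometric input from the high girth and the tree-structure of BFS balls must be used most carefully.
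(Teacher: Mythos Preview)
Your base case ($k=0$) is correct and matches what the paper uses: in a longest induced path $p_1,\dots,p_\ell$, the $\ge d-1$ off-path neighbours of $p_1$ have pairwise distinct first neighbours on the path (by $C_4$-freeness), each of index $\ge 4$ (by girth $5$), forcing an induced cycle of length $\ge d+3$. The extra ``symmetric analysis at $p_\ell$'' is unnecessary.

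The inductive step, however, is not a proof. You never actually construct the auxiliary graph $G'$; you only say its vertices are ``suitably chosen endpoints of length-$(d-1)$ induced paths \dots\ emanating from BFS trees'' and then acknowledge that designing $G'$ is ``the main obstacle''. The natural candidate --- contracting BFS balls of some fixed radius $r$ --- does not do what you need: a cycle of length $\ell$ in the contracted graph lifts to a cycle in $G$ of length at most $\ell(4r+1)$, so contraction divides the girth by roughly $4r+1$ rather than subtracting $16$. With girth $16k+5$ in $G$ you cannot reach girth $16(k-1)+5$ in $G'$ this way. Nor is it clear why edges of your $G'$ would lift to induced paths of length exactly $d-1$, or why three of them would be exempt.

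The paper does something genuinely different and avoids induction on $k$ altogether. It contracts, in a single step, disjoint BFS trees of radius between $k$ and $2k$ covering $V(G)$. Because each such tree has at least $d(d-1)^{k-1}$ leaves and (by the girth hypothesis) any two trees are joined by at most one edge, the resulting minor $G'$ has minimum degree at least $d(d-1)^k$; the full girth bound $16k+5=1+4(4k+1)$ is exactly what is needed to guarantee $G'$ has girth $\ge 5$. One application of the base case to $G'$ then gives an induced cycle of length $\ge 3+d(d-1)^k$ in $G'$, which lifts to an induced cycle in $G$ of at least that length. In short: rather than preserving the minimum degree and chipping away $16$ from the girth per step, the paper spends almost all the girth at once in exchange for an exponential boost in minimum degree.
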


\noindent
{\em Nota bene}: the first part of this proof closely follows that of~\cite[Prop.~6]{KuOs03}.

\begin{proof}
Let $G$ be a graph with minimum degree $d$ and girth $g$.
For a nonnegative integer $r$ and a vertex $v$ in $G$, we let $B_r(v):= \left\{x \in V(G) \mid d_G(x,v) \le r \right\} $ denote the ball of radius $r$ centred at $x$. Let $X$ be a maximal set of vertices that are pairwise at distance at least $2k+1$. Observe that the balls of radius $k$ centred at the vertices of $X$ are pairwise disjoint. Moreover, each vertex is at distance at most $2k$ from $X$.
We extend the collection of balls $(B_k(x))_{x\in X}$ to a partition of $V(G)$ as follows. First add each vertex at distance $k+1$ from $X$ to one of the balls to which it is adjacent. Then add each vertex at distance $k+2$ from $X$ to one of the parts constructed in the previous step. Continue in this way until all vertices of $G$ are covered. For each $x\in X$, denote by $T(x)$ the graph induced by the part obtained from $B_k(x)$ in this way. Because $G$ has girth at least $4k+2$, each $T(x)$ is an induced subtree of $G$. Each non-leaf of the subtree induced by $B_k(x)$ has degree at least $d$, so $T(x)$ has at least $d(d-1)^{k-1}$ leaves, and thus $T(x)$ sends at least $d(d-1)^{k}$ edges to other trees. Moreover, the fact that $g\ge 1+ 2\cdot(4k+1)$  implies that $T(x_1)$ and $T(x_2)$ are joined by at most one edge, for any two distinct $x_1,x_2\in X$. Therefore the minor $G'$ obtained by contracting the trees has minimum degree at least $d(d-1)^{k}$. 
Since $g\ge 1+ 4\cdot(4k+1)$, $G'$ must have girth at least $5$. This allows us to apply Lemma~\ref{le:boundedcodegree} (with $t=2$), together with the girth $5$ condition, yielding an induced cycle of length at least $3+d(d-1)^{k}$ in $G'$. Note that for any two vertices $x,y\in V(G')$, $x$ and $y$ are adjacent if and only if their pre-images in $G$ are joined by precisely one edge. We conclude that $G$ has an induced cycle of length at least $ 3+d(d-1)^{k}.$
\end{proof}

We remark that Theorem~\ref{thm:largegirth} for $k=0$ is sharp when $d=2$ and $d=3$, by $C_5$ and the Petersen graph, respectively. On the other hand, it is conceivable for $k=0$ that one could guarantee an induced cycle of length $\Omega(d^{3/2})$ as $d\to\infty$, which would be best possible for infinitely many values of $d$, due to the Erd\H{o}s-Renyi orthogonal polarity graph, cf.~e.g.~\cite{MuWi07}.

Every graph with chromatic number $\chi$ has an induced subgraph with minimum degree at least $\chi-1$. 
The second part of Theorem~\ref{thm:highergirth} thus follows immediately from Theorem~\ref{thm:largegirth}.


The following corollary (with $t=2$) implies the first part of Theorem~\ref{thm:highergirth}.
\begin{corollary}
For each $t\ge 2$ there is some $\const>0$ such that every regular $K_{2,t}$-free non-forest graph of chromatic number $\chi$ contains an induced cycle of length at least $\const\chi \log \chi$.
\end{corollary}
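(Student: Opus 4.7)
The plan is to reduce the corollary to Lemma~\ref{le:boundedcodegree} by establishing that the common degree $d$ of our $d$-regular $K_{2,t}$-free non-forest $G$ of chromatic number $\chi$ satisfies $d=\Omega_t(\chi\log\chi)$. Two easy observations get us started: since $G$ contains a cycle, $d\ge 2$, and the greedy colouring bound yields $d\ge\chi-1$.

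Next I would exploit the $K_{2,t}$-free hypothesis to show that $G$ is locally sparse. Indeed, if some $u\in N(v)$ had $t$ neighbours $w_1,\ldots,w_t$ inside $N(v)$, then $v$ and $u$ would be joined to each of $w_1,\ldots,w_t$, realising a $K_{2,t}$ subgraph. So $G[N(v)]$ has maximum degree at most $t-1$, and in particular spans at most $(t-1)d/2$ edges. This is exactly the hypothesis of the theorem of Alon, Krivelevich and Sudakov on colouring locally sparse graphs: a graph of maximum degree $\Delta$ in which every neighbourhood spans at most $\Delta^2/f$ edges satisfies $\chi=O(\Delta/\log f)$. Taking $\Delta=d$ and $f=\Theta_t(d)$ gives $\chi(G)=O_t(d/\log d)$, and combined with $d\ge\chi-1$ this inverts to $d=\Omega_t(\chi\log\chi)$.

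Finally, Lemma~\ref{le:boundedcodegree} applied to $G$ (with the same $t$) produces an induced cycle of length at least $2+\lceil(d-1)/(t-1)\rceil=\Omega_t(d)=\Omega_t(\chi\log\chi)$, as required. For very small $\chi$ (namely $\chi\in\{2,3\}$) the claimed bound becomes vacuous once the hidden constant is chosen small enough, so only the existence of a single induced cycle is needed there.

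The main obstacle is the middle paragraph: one genuinely needs a bound of shape $\chi=O_t(d/\log d)$ for $K_{2,t}$-free graphs of maximum degree $d$, which is not elementary. The Alon--Krivelevich--Sudakov theorem certainly suffices, but more recent refinements on the chromatic number of $K_{r,t}$-free graphs could be substituted in; the constants absorbed into $\Omega_t$ reflect this choice. The role of regularity is only to transfer a lower bound on $\chi$ to a lower bound on the global degree $d$; without it the colour-critical subgraph trick would give minimum degree $\chi-1$ but not control the maximum degree that AKS requires.
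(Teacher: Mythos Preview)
Your proof is correct and follows essentially the same route as the paper: bound the edges in each neighbourhood by $(t-1)d/2$ via the $K_{2,t}$-free hypothesis, apply Alon--Krivelevich--Sudakov to get $\chi=O_t(d/\log d)$ and hence $d=\Omega_t(\chi\log\chi)$, then invoke Lemma~\ref{le:boundedcodegree}. Your added details (the greedy bound $d\ge\chi-1$ to justify the inversion, the treatment of small $\chi$, and the remark on why regularity is needed) are all accurate and make explicit what the paper leaves implicit.
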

\begin{proof}
Given a $K_{2,t}$-free graph with maximum degree $\Delta$ and an arbitrary vertex $v$, each neighbour of $v$ has at most $t-1$ neighbours in $N(v)$. Therefore the number of edges in the induced subgraph on the set of all neighbours of any vertex does not exceed $\frac{t-1}{2} \Delta$. This together with the result of Alon, Krivelevich and Sudakov~\cite{AKS99} implies that every $K_{2,t}$-free graph with maximum degree $\Delta$ has chromatic number $\chi= O( \Delta/\log \Delta)$ as $\Delta\to\infty$, and hence $\Delta = \Omega(\chi \log \chi)$ as $\chi\to\infty$. Now combine this with the consequence of Lemma~\ref{le:boundedcodegree} that every $\Delta$-regular $K_{2,t}$-free non-forest graph has an induced cycle of length $\Omega(\Delta)$.
\end{proof}

Lemma~\ref{le:boundedcodegree} in particular shows that girth $5$ graphs contain induced paths of linear length. 
In fact they contain many such paths. We hope that this might become useful towards further progress in Conjectures~\ref{conj:aravind} and~\ref{conj:nonrainbow}.

\begin{lemma}\label{le:girth5inducedpaths}
In any graph of girth at least $5$ and minimum degree $d \ge 2$, there are $d!$ distinct induced paths of order $d+2$ starting at any vertex.
\end{lemma}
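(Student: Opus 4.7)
The plan is to construct induced paths greedily starting from $v_0 := v$, extending by one vertex at each step, and to lower-bound the number of valid extensions available at step $i$. Two consequences of girth at least $5$ will be used throughout: (1) adjacent vertices share no common neighbour (triangle-freeness), and (2) any two distinct vertices share at most one common neighbour (otherwise a $C_4$ would arise).

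Suppose we have already built an induced path $v_0 v_1 \cdots v_{i-1}$ with $i \le d+1$. A valid choice for $v_i$ must lie in $N(v_{i-1})$, differ from every $v_j$ with $j \le i-2$, and be non-adjacent to each such $v_j$. Since the path built so far is induced, $v_{i-2}$ is the only $v_j$ (with $j \le i-2$) lying in $N(v_{i-1})$, so the identity constraint removes at most one neighbour. The remaining forbidden neighbours of $v_{i-1}$ are common neighbours with some earlier path vertex, and here the girth assumption kicks in:
\begin{itemize}
\item for $j = i-2$, there are none by (1);
\item for $j = i-3$, any common neighbour other than $v_{i-2}$ would complete a $C_4$, so there are none beyond $v_{i-2}$;
\item for $j \le i-4$, there is at most one by (2).
\end{itemize}
Hence at most $\max(1, i-2)$ of the at least $d$ neighbours of $v_{i-1}$ are excluded, giving at least $d$, $d-1$, $d-1$, and $d-i+2$ valid extensions for $i=1$, $i=2$, $i=3$, and $i \ge 4$, respectively; each such lower bound is $\ge 1$ throughout $1 \le i \le d+1$.

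Multiplying these per-step lower bounds over $i = 1, \ldots, d+1$ gives
\[
d \cdot (d-1) \cdot (d-1) \cdot (d-2) \cdots 1 \;=\; (d-1) \cdot d! \;\ge\; d!
\]
distinct choice sequences, each producing a distinct induced path of order $d+2$ starting at $v_0$ (paths oriented from $v_0$ are determined by their sequence of vertices). The only real subtlety is the case analysis distinguishing $j = i-3$, where (2) does yield a common neighbour but it is precisely the already-counted $v_{i-2}$, from $j \le i-4$, where a genuinely new common neighbour can arise and (2) caps it at one; once this observation is in place, the remaining arithmetic and the non-collapse of the product at $i = d+1$ follow immediately.
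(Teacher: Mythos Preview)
Your argument is correct. The key observations are exactly right: the only path vertex in $N(v_{i-1})$ is $v_{i-2}$ (so the identity constraint costs one), triangle-freeness kills the $j=i-2$ case, the $j=i-3$ case produces only the already-counted $v_{i-2}$, and each $j\le i-4$ contributes at most one further exclusion by $C_4$-freeness. The product $d\cdot(d-1)\cdot(d-1)\cdot(d-2)\cdots 1=(d-1)\,d!\ge d!$ is fine, and distinct choice sequences yield distinct rooted paths.

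Your route differs from the paper's. The paper argues by induction on $d$: for each neighbour $w$ of $v$, it deletes $v$ together with $N(v)\setminus\{w\}$, observes (again from girth $5$) that every vertex loses at most one neighbour so the component $G_w$ containing $w$ has minimum degree at least $d-1$, and then invokes the inductive hypothesis to obtain $(d-1)!$ induced paths of order $d+1$ in $G_w$ starting at $w$; prepending $v$ gives $d\cdot(d-1)!=d!$ paths. Your proof is a direct step-by-step extension with explicit exclusion counting; it avoids the deletion/induction scaffolding and even squeezes out an extra factor $d-1$ because your step $i=3$ still allows $d-1$ choices rather than the $d-2$ implicit in the paper's recursion. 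On the other hand, the paper's ``delete $v$ and all but one neighbour'' device is what it reuses, essentially verbatim, to prove the stronger rooted-forest embedding lemma that follows; your per-step bookkeeping is tailored to paths and would need reworking to handle branching.
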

\begin{proof}
We apply induction on $d\ge 2$.
Let $G$ be a graph of girth at least $5$ and minimum degree $d$ and let $v\in V(G)$.
 If $d=2$, then there must be a cycle of $G$ containing $v$. We may assume that this cycle is an induced cycle of length at least $5$, and therefore $v$ is an endvertex of two distinct induced paths of order at least $4=d+2$. So we may assume that $d\ge 3$. For any neighbour $w$ of $v$, let $G_w$ denote the connected component containing $w$ in the graph obtained by deleting $v$ and $N(v) \backslash \left\{w\right\}$. 
No vertex of $G_w$ can have more than one neighbour in $\left\{v\right\} \cup N(v)\backslash \left\{w \right\}$, for otherwise $G$ would contain a triangle or a $4$-cycle. It follows that the minimum degree of $G_w$ is at least $d-1$. Hence induction yields that for each $w \in N(v)$, there are at least $(d-1)!$ induced paths in $G_w$ of order $d+1$, starting in $w$. By appending $v$ to these paths, we obtain $(d-1)!$ distinct induced paths of order $d+2$ that start in $v$. Since there are at least $d$ choices for $w$, the lemma follows.
\end{proof}

Lemma~\ref{le:girth5inducedpaths} on induced paths can be extended to rooted induced forests as follows. Roughly speaking, the following says that in any girth $5$ graph with large minimum degree, every large forest occurs many times as an induced subgraph.

\begin{lemma}\label{le:rootedforests}
Let $G$ be a graph of girth at least $5$ and minimum degree $d$. Let $T$ be a forest on $d$ vertices, with $t$ components $T_1, \ldots, T_t$. For each $1\le i \le t$, let $u_i$ be any vertex of $T_i$. Let $S:=\left\{v_1,\ldots, v_t\right\}$ be any size $t$ independent set of $G$. Then there exists an injective graph homomorphism $f: V(T) \to V(G)$ such that
\begin{itemize}
\item $f(u_i)=v_i$ for all $1\le i \le t$, and
\item $f(V(T))$ induces a copy of $T$ in $G$. 
\end{itemize}
\end{lemma}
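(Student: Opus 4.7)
The approach is to strengthen the statement to allow $T$ to have at most $d$ vertices, and then induct on the number of edges $|V(T)| - t$ of $T$, building the embedding one vertex at a time. The base case $|V(T)| = t$ is trivial: $T$ has no edges, and setting $f(u_i) := v_i$ yields an induced copy of $T$ since the $v_i$ form an independent set in $G$ by assumption.

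For the inductive step, select a leaf $w$ of some component $T_j$ with $w \neq u_j$ (which exists because $|V(T)| > t$), and let $p$ denote its neighbour in $T$. The forest $T' := T - w$ has $|V(T)| - 1 \leq d$ vertices and still contains each root $u_i$, so the inductive hypothesis yields an injective homomorphism $f' : V(T') \to V(G)$ that maps each $u_i$ to $v_i$ and realises $T'$ as an induced subgraph of $G$. It then suffices to pick $f(w) \in N_G(x)$, where $x := f'(p)$, subject to $f(w) \notin f'(V(T'))$ and $f(w)$ having no $G$-neighbours in $f'(V(T')) \setminus \{x\}$; any such choice extends $f'$ to a valid embedding of $T$.

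The key observation is that each already-embedded vertex $y \in f'(V(T')) \setminus \{x\}$ forbids at most one candidate in $N_G(x)$. This uses girth $\geq 5$ in an essential way: $G$ is both triangle-free and $C_4$-free, so any two distinct vertices share at most one common neighbour, and adjacent pairs share none. Indeed, if $y$ is adjacent to $x$, then triangle-freeness rules out any common neighbour, so only $y$ itself is forbidden; if $y$ is non-adjacent to $x$, then $y \notin N_G(x)$ and at most one neighbour of $y$ can lie in $N_G(x)$ by $C_4$-freeness. Summing over the $|V(T)| - 2$ choices of $y$, the total number of forbidden elements of $N_G(x)$ is at most $|V(T)| - 2$, leaving at least $|N_G(x)| - (|V(T)| - 2) \geq d - |V(T)| + 2 \geq 2$ valid candidates, from which $f(w)$ can be chosen. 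The main obstacle, a mild one, is picking the right induction parameter and loosening the statement from $|V(T)| = d$ to $|V(T)| \leq d$ so the recursion closes; once this is done, the extension step is essentially a clean application of the girth constraint, in the spirit of Lemma~\ref{le:girth5inducedpaths}.
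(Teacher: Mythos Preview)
Your proof is correct, but it proceeds by a genuinely different induction from the paper's. You fix $G$, relax the hypothesis to $|V(T)|\le d$, and induct on the number of edges of $T$, deleting a non-root \emph{leaf} at each step and greedily extending the embedding into $N_G(x)$; the girth~$5$ condition is used exactly as you say, to ensure each already-placed vertex blocks at most one candidate neighbour of $x$. The paper instead inducts on $|V(G)|$: it deletes the \emph{root} $u_1$ from $T$ (splitting $T_1$ into subtrees rooted at the neighbours of $u_1$), simultaneously deletes $v_1$ and most of $N_G(v_1)$ from $G$ so that the minimum degree drops to $d-1$, and then applies the induction hypothesis with the exact equality $|V(T')|=d-1$ preserved. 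Your leaf-by-leaf greedy embedding is arguably cleaner and avoids modifying $G$, at the cost of the mild strengthening to $|V(T)|\le d$; the paper's root-deletion keeps the tight matching between $|V(T)|$ and the minimum degree but requires tracking how the girth bound controls the degree loss in $G'$. Both arguments exploit the same combinatorial fact (two vertices share at most one neighbour, and none if adjacent), just at different points in the recursion.
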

\begin{proof}
We apply induction on $n:=|V(G)|$. There is nothing to prove for $n=1$, so suppose $n>1$ and assume the result is true for all graphs on fewer than $n$ vertices. 
Let $u(1),\ldots, u(k)$ denote the neighbours of $u_1$ in $T_1$, and let $T'$ be the forest obtained from $T$ by deleting $u_1$. Furthermore, denote by $T(1),\ldots T(k)$ the components of the subforest $T_1\backslash \left\{u_1\right\}$.
Because $G$ has no triangles or $4$-cycles, any two vertices in $N(v_1)$ have no common neighbour other than $v_1$. Therefore $v_1$ has at least $|N(v_1)|- (t-1)\ge d -(t-1)  \ge |V(T_1)| >  k$ neighbours that are not incident to any vertex of $S$ other than $v_1$. Thus there exists a set $N':=\left\{v(1),\ldots, v(k)\right\}$ of $k$ distinct neighbours of $v_1$, such that $S'=S\cup N' \backslash \left\{v_1\right\}$ is an independent set of $G$. Let $G'$ denote the graph obtained from $G$ by deleting $v_1$ and $N(v_1)\backslash N'$. 
Because $G$ has girth at least five, the minimum degree of $G'$ is at least $d-1$. Moreover, $T'$ is a forest on $d-1$ vertices, with components $T(1),\ldots, T(k), T_2,\ldots, T_t$. Recall furthermore that $S'$ is an independent set of $G$, and hence of $G'$.
Thus, by induction, we know that there is a mapping $f':V(T')\to V(G')$  such that $f'(u_i)=v_i$ for all $2\le i\le t$, $f(u(j))=v(j)$ for all $1\le j\le k$, and $f'(V(T'))$ induces a copy of $T'$ in $G'$.
Now we can extend $f'$ to the desired mapping $f$ by defining $f(x)=f'(x)$ for all $x \in V(T')$, and $f(u_1)=v_1$.
\end{proof}

\begin{corollary}\label{cor:generalisedJohanssonForRegularGirth5}
There is some $\const>0$ such that for every forest $H$, every regular girth $5$ graph containing no induced $H$ has chromatic number at most $\const |V(H)|/\log|V(H)|$.
\end{corollary}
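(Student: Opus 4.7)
The plan is to combine Lemma~\ref{le:rootedforests} with Johansson's theorem~\cite{Joh96}. Write $D := |V(H)|$ and let $G$ be a $d$-regular graph of girth at least $5$ with no induced copy of $H$. The key claim is that $d \le D-1$; once this is in hand, Johansson's bound $\chi(G) = O(d/\log d)$ for triangle-free graphs of maximum degree $d$, combined with the monotonicity of $x/\log x$ for $x \ge e$, immediately yields $\chi(G) = O(D/\log D)$. A suitable enlargement of the constant $\const$ absorbs the (finitely many) values of $D$ below whichever threshold Johansson's bound requires, noting that for such $D$ one trivially has $\chi(G) \le d+1 \le D$.

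To prove $d \le D-1$, I would argue by contradiction: assume $d \ge D$ and let $t$ denote the number of components of $H$. Pad $H$ with $d-D$ isolated vertices to form the forest $H' := H \cup (d-D)K_1$ on exactly $d$ vertices and $t' := t + (d-D)$ components; note that $H$ is an induced subforest of $H'$. Because $G$ is triangle-free, the neighbourhood $N_G(v)$ of any vertex $v$ is an independent set of size exactly $d \ge t'$, so one can select any $t'$ of its vertices to serve as the independent set $S = \{v_1,\dots,v_{t'}\}$ required by Lemma~\ref{le:rootedforests}, and assign arbitrary roots $u_i$ in each component of $H'$ (each isolated $K_1$ being its own root). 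The lemma then produces an injective homomorphism $f: V(H') \to V(G)$ whose image induces a copy of $H'$, and hence of $H$, in $G$, contradicting the hypothesis.

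The main (and essentially only) obstacle is mildly cosmetic: Lemma~\ref{le:rootedforests} is phrased for a forest on exactly $d$ vertices, whereas $H$ may have fewer, which is why the isolated-vertex padding is needed; the triangle-free condition is then what guarantees, for free, an independent set in $G$ large enough to host all of the extra roots. The regularity assumption on $G$ plays its only role in the second step, by converting the minimum-degree bound $d \le D-1$ coming from the embedding into a maximum-degree bound, which is precisely the form in which Johansson's theorem delivers its conclusion.
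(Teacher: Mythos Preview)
Your proposal is correct and follows essentially the same approach as the paper's proof: use Lemma~\ref{le:rootedforests} to deduce that the regularity degree is at most $|V(H)|-1$, then invoke Johansson's theorem. Your version is simply more explicit about the two cosmetic points the paper leaves implicit, namely the padding of $H$ by isolated vertices so that Lemma~\ref{le:rootedforests} applies verbatim, and the choice of the independent set $S$ inside a neighbourhood.
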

\begin{proof}
Let $G$ be a $\Delta$-regular girth $5$ graph. 
Let $H$ be a forest that does not occur as an induced subgraph of $G$. Then by Lemma~\ref{le:rootedforests}, $H$ must have more than $\Delta$ vertices. Combining this with Johansson's theorem~\cite{Joh96} yields $\chi(G) \leq \const'\Delta/\log\Delta \leq \const'|V(H)|/\log|V(H)|$, for some $\const'>0$ and all sufficiently large $|V(H)|$. From this the corollary easily follows.
\end{proof}

\subsection*{Acknowledgement}

We thank Fran\c{c}ois Pirot for an observation using Johansson's Theorem. We are grateful to Gwena\"el Joret and Piotr Micek for helpful discussions in relation to Section~\ref{sec:highergirth}.
We are also grateful to Gwena\"el Joret for his help in identifying a subtlety in an earlier version of this work.

\bibliographystyle{abbrv}
\bibliography{colind}

\appendix

\section{Large independent sets}

\begin{proof}[Proof of Theorem~\ref{thm:offdiagonal}]
Next let $G_n$ be a sequence of instantiations of the final output of the $K_r$-free process on $n$ vertices such that $\alpha(G_n) = O(n^{2/(r+1)}(\log n)^{1-1/(\binom{r}{2}-1)})$ as $n\to\infty$~\cite{BoKe10}.
Thus $\chi(G_n) \ge n/\alpha(G_n) =\Omega(n^{1-2/(r+1)}/(\log n)^{1-1/(\binom{r}{2}-1)})$, from which it follows that $n=O(\chi(G_n)^{(r+1)/(r-1)}(\log \chi(G_n))^{(r^2-r-4)/((r-2)(r-1))})$ as $\chi(G_n) \to\infty$.
From this $\alpha(G_n) = O(\chi(G_n)^{2/(r-1)} (\log \chi(G_n))^{(r^2-r-4)/((r-2)(r-1))})$.

Let $G$ be a $K_r$-free graph of chromatic number $\chi$ with $n$ vertices.
By a classic result of Ajtai, Koml\'os and Szemer\'edi~\cite{AKS80}, the independence number $\alpha$ of $G$ satisfies $\alpha = \Omega(n^{1/(r-1)}(\log n)^{1-1/(r-1)})$ as $n\to\infty$. 
Moreover, from a sequence of iterations of this result (cf.~e.g.~\cite[pp.124--5]{JeTo95}) it follows that the chromatic number of $G$ satisfies $\chi = O((\frac{n}{\log n})^{1-1/(r-1)})$, implying $n = \Omega(\chi^{(r-1)/(r-2)}\log \chi)$ as $\chi\to\infty$. 
From this it follows that $\alpha = \Omega(\chi^{1/(r-2)}\log\chi)$.
\end{proof}

\end{document}